\theoremstyle{plain}
\newtheorem{thm}{Theorem}[section]
\newtheorem{cor}[thm]{Corollary}
\newtheorem{pro}[thm]{Problem}
\newtheorem{lem}[thm]{Lemma}
\theoremstyle{definition}
\newtheorem{defn}{Definition}[section]
\newtheorem{ass}{Assumption}[section]
\newtheorem{rmk}{Remark}[section]
\makeatletter\@addtoreset{equation}{section} \makeatother
\begin{document}

\title{   Maximum Principle for Partial Observed  Zero-Sum Stochastic
Differential Game  of Mean-Field SDEs
\thanks{This work was supported by the Natural Science Foundation of Zhejiang Province
for Distinguished Young Scholar  (No.LR15A010001),  and the National Natural
Science Foundation of China (No.11471079, 11301177) }}

\date{}

\author[a]{Maoning Tang}
\author[a]{Qingxin Meng\footnote{Corresponding author.
\authorcr
\indent E-mail address: mqx@zjhu.edu.cn(Q. Meng),}}
\affil[a]{\small{Department of Mathematical Sciences, Huzhou University, Zhejiang 313000, China}}
\maketitle

\begin{abstract}

In this paper, we consider  a partial  observed   two-person zero-sum stochastic
differential game problem where the system is governed by  a
stochastic differential  equation of mean-field type.  Under  standard assumptions on the coefficients,  the maximum principles for optimal open-loop control in a strong sense as well as a weak one are established  by the associated optimal control  theory in Tang and Meng (2016).
 To illustrate the general results, a class of  linear quadratic stochastic differential game problem  is discussed  and the existence and
dual characterization  for the partially observed  open-loop saddle are  obtained.
\end{abstract}

\section{Introduction}
Recently, thanks to many practical applications such as in economics and finance, stochastic optimal control problems for stochastic differential equation (SDE) of   mean-field type have been extensively studied and  a  number of  important theoretical and practical application results are
      obtained under full observations or partial observations.   As Djehiche and Tembine (2016) stated in their paper, the speciality of the
optimal control problem of mean-field type
      is that  the coefficients of the stochastic  state equation and   cost functional are
 dependent not only on the state and the control,  but also on their probability distribution. The presence of  the mean-field term makes the control problem   time-inconsistent and  the dynamic programming principle (DPP) ineffective, which motivates to establish
   the stochastic maximum  principle (SMP) to
    solve this type of optimal control problems instead of trying extensions of DPP.
    And the so-called full observed optimal control problem is that  in this case the controller has full complete information filtration available on the admissible control.
       We refer to interested readers to Andersson and Djehiche (2011), Buckdahn et al (2011),
Li (2012),  Meyer-Brandis et al(2012),
Shen
and Siu (2013), Du et al (2013), Elliott(2013),
Hafayed (2013), Yong (2013),
Chala(2014), Shen et al (2014), Meng and Shen(2015)
 and  the reference therein for the various optimal control theory results on the mean-field models under full observation. On the other hand,  for the partial observed or
 partial information optimal control problem, the objective is to find an optimal
control for which the controller has less information than the complete information filtration. In particular, sometimes an economic model in which there are information gaps
among economic agents can be formulated as a partial information optimal control problem
(see {\O}ksendal, B. (2006), Kohlmann and Xiong (2007) ). A great of results on stochastic
optimal control
without mean-field  term  under   partial
observation  or partial  information
have  been obtained by  many authors
for various types of stochastic systems
via establishing the corresponding   MP and DPP. See e.g., Bensoussan(1983), Tang (1998), Baghery et al. (2007), Wu(2010), Wang and  Wu(2009),
Wang et al(2013, 2015a), and the reference therein for
more detailed discussion.
  Recently,  for partial observed stochastic optimal control problem of stochastic systems
   of mean-field type,  due to the theoretical and practical interest, it become more popular, e.g,
Wang et al (2014a, 2014b, 2015b, 2016),
 Djehiche and
 Tembine (2016), Ma and Liu (2017),
where the corresponding
maximum principles
are established and  practical
finance applications are illustrated.

  This purpose of this paper is to
  extend  the optimal control problem to  two-person zero-sum differential game problem for SDE of mean-field type under partial observation.
  As stated in Basar (2010), game theory deals with strategic interactions among multiple decision makers, called players (and in
some context agents), with each player's preference ordering among multiple alternatives captured
in an objective function for that player, which she either tries to maximize (in which case the
objective function is a utility function or benefit function) or minimize (in which case we refer to
the objective function as a cost function or a loss function).
Specially, we say that  a game problem has the zero-sum property which means that there is a single performance criterion which one player tries to minimize and the other tries to maximize.
Differential game theory investigates conflict problems in systems which are driven by differential equations. This topic lies at the intersection of game theory (several players are involved) and of controlled systems (the differential equations are controlled by the players). The theory of differential games was initiated by Issacs(1954). It was later studied in greater detail by Fleming and Berkovitz (1955). After
the development of Pontryagin's maximum principle, it became clear that there was a connection between differential games and optimal control
theory. In fact, differential game problems represent a generalization of optimal control problems in cases where there are more than one controller
or player. However, differential games are conceptually far more
complex than optimal control problems in the sense that it is no longer
obvious what constitutes a solution; indeed, there are a number of different types of solutions
such as minimax solutions for zerosum
differential games,  Nash solutions for nonzero-sum
game problems , Stackelberg differential games, along with possibilities of cooperation and bargaining,  see  Basar and Olsder (1999), Sethi and Thompson(2000). For the partial information stochastic differential
games,   recently, An and
${\O}$ksendal (2008)  established a maximum principle
for forward systems with Poisson jumps.  Moreover,  we refer to Wang and Yu (2010, 2012), Meng and  Tang(2010), Xiong et al (2016) and the references therein for more associated
results  on the partial information or partial observed    differential games for
all kinds of different stochastic systems without
mean-field terms.

But to our best knowledge,  there is
few discussions on the
 partial observed stochastic differential games problem for the  stochastic system of mean-field type, which motives us to write
this paper.
The main contribution of this paper  is to establish the partial observed maximum principle in the weak formulation and strong formulation for
the optimal open-loop control under our two-person zero-sum differential games framework  by  using  the results in Tang and Meng (2016). The
results obtained in this paper can be considered as a generalization of stochastic optimal control problem of mean-field type  to the two-person zero-sum
case under partial observations.  As an application, a  two-person zero-sum
stochastic differential game
of linear mean-field stochastic differential equations with a quadratic
cost criteria under partial observation is discussed where the existence of the corresponding
open-loop saddle is obtained and the
optimal control is characterized explicitly by adjoint processes.

The rest of this paper is organized as follows. We
introduce useful notations
and formulate the two-person zero-sum
differential game problem for
mean-field SDEs under partial
observation.  Section 2 is devoted to  the necessary maximum principle in a weak formulation  for optimal open-loop control
 by the optimal control theory in Tang and Meng (2016).   In Section 3, necessary as well as sufficient optimality
conditions for our differential games
in a strong formulation is derived.
 As an application, a class of  linear quadratic stochastic differential game problem under partial observations  is studied.

\section{Basic Notations}
In this section, we introduce
some basic notations  which will be
used in this paper.
Let ${\cal T} : = [0, T]$ denote a finite time index, where $0<T <
\infty$. We consider a complete probability space $( \Omega,
{\mathscr F}, {\mathbb P} )$  equipped with two one-dimensional
standard Brownian motions $\{W(t), t \in {\cal T}\}$ and $\{Y(t),t \in {\cal T}\},$
respectively. Let $%
\{\mathscr{F}^W_t\}_{t\in {\cal T}}$ and $%
\{\mathscr{F}^Y_t\}_{t\in {\cal T}}$ be $\mathbb P$-completed natural
filtration generated by $\{W(t), t\in {\cal T}\}$ and $\{Y(t), t\in {\cal T}\},$ respectively. Set $\{\mathscr{F}_t\}_{t\in {\cal T}}:=\{\mathscr{F}^W_t\}_{t\in {\cal T}}\bigvee
\{\mathscr{F}^Y_t\}_{t\in {\cal T}}, \mathscr F=\mathscr F_T.$   Denote by $\mathbb E[\cdot]$ the expectation
under the probablity $\mathbb P.$
 Let $E$ be a Euclidean space. The inner product in $E$ is denoted by
$\langle\cdot, \cdot\rangle,$ and the norm in $ E$ is denoted by $|\cdot|.$
Let $A^{\top }$ denote  the
transpose of the matrix or vector $A.$
For a
function $\phi:\mathbb R^n\longrightarrow \mathbb R,$ denote by
$\phi_x$ its gradient. If $\phi: \mathbb R^n\longrightarrow \mathbb R^k$ (with
$k\geq 2),$ then $\phi_x=(\frac{\partial \phi_i}{\partial x_j})$ is
the corresponding $k\times n$-Jacobian matrix. By $\mathscr{P}$ we
denote the
predictable $\sigma$ field on $\Omega\times [0, T]$ and by $\mathscr %
B(\Lambda)$ the Borel $\sigma$-algebra of any topological space
$\Lambda.$ In the follows, $K$ represents a generic constant, which
can be different from line to line.

 Next we introduce some spaces of random variable and stochastic
 processes. For any $\alpha, \beta\in [1,\infty),$ we let

$\bullet$~~$M_{\mathscr{F}}^\beta(0,T;E):$ the space of all $E$-valued and ${%
\mathscr{F}}_t$-adapted processes $f=\{f(t,\omega),\ (t,\omega)\in {\cal T}
\times\Omega\}$ satisfying
$
\|f\|_{M_{\mathscr{F}}^\beta(0,T;E)}\triangleq{\left (\mathbb E\bigg[\displaystyle%
\int_0^T|f(t)|^ \beta dt\bigg]\right)^{\frac{1}{\beta}}}<\infty. $

$\bullet$~~$S_{\mathscr{F}}^\beta (0,T;E):$ the space of all $E$-valued and ${%
\mathscr{F}}_t$-adapted c\`{a}dl\`{a}g processes $f=\{f(t,\omega),\
(t,\omega)\in {\cal T}\times\Omega\}$ satisfying $
\|f\|_{S_{\mathscr{F}}^\beta(0,T;E)}\triangleq{\left (\mathbb E\bigg[\displaystyle\sup_{t\in {\cal T}}|f(t)|^\beta \bigg]\right)^{\frac
{1}{\beta}}}<+\infty. $

$\bullet$~~$L^\beta (\Omega,{\mathscr{F}},P;E):$ the space of all
$E$-valued random variables $\xi$ on $(\Omega,{\mathscr{F}},P)$
satisfying $ \|\xi\|_{L^\beta(\Omega,{\mathscr{F}},P;E)}\triangleq
\sqrt{\mathbb E|\xi|^\beta}<\infty. $

$\bullet$~~$M_{\mathscr{F}}^\beta(0,T;L^\alpha (0,T; E)):$ the space of all $L^\alpha (0,T; E)$-valued and ${%
\mathscr{F}}_t$-adapted processes $f=\{f(t,\omega),\ (t,\omega)\in[0,T]%
\times\Omega\}$ satisfying $
\|f\|_{\alpha,\beta}\triangleq{\left\{\mathbb E\bigg[\left(\displaystyle
\int_0^T|f(t)|^\alpha
dt\right)^{\frac{\beta}{\alpha}}\bigg]\right\}^{\frac{1}{\beta}}}<\infty. $

\subsection{Formulation of  Two-Person Zero-Sum Differential Games of Mean-Field Type Under  Partial Observation }

In the following,  we formulate  a partial observed two-person zero-sum stochastic
differential game problem  in a weak form and a strong form, respectively, where the system is governed by the following
nonlinear  mean-field stochastic differential equation
\begin{equation}
\displaystyle\left\{
\begin{array}{lll}
dx(t) & = & b(t,x(t),\mathbb E [ x(t)],u_1(t),
\mathbb E[u_1(t)], u_2(t), \mathbb E[u_2(t)])dt
\\&&+\displaystyle
g(t,x(t),\mathbb E [ x(t)],u_1(t),
\mathbb E[u_1(t)], u_2(t), \mathbb E[u_2(t)] )dW(t)
\\&&+\displaystyle {\tilde g}(t,x(t),\mathbb E [ x(t)], u_1(t),
\mathbb E[u_1(t)], u_2(t), \mathbb E[u_2(t)] )dW^{(u_1,u_2)}(t), \\
\displaystyle x(0) & = & a\in \mathbb{R}^n,
\end{array}%
\right.  \label{eq:1.1}
\end{equation}%
with an obvervation
\begin{equation}\label{eq:1.2}
\displaystyle\left\{
\begin{array}{lll}
dY(t) & = & h(t,x(t),\mathbb E [ x(t)], u_1(t),
\mathbb E[u_1(t)], u_2(t), \mathbb E[u_2(t)] )dt+dW^{(u_1, u_2)}(t),  \\
\displaystyle y(0) & = & 0,
\end{array}
\right.
\end{equation}
where $b: {\cal T} \times \Omega \times {\mathbb R}^n \times
{\mathbb R}^n
 \times U_1\times U_1\times U_2\times U_2\rightarrow {\mathbb R}^n$, $g:
{\cal T} \times \Omega \times {\mathbb R}^n \times {\mathbb R}^n
 \times U_1\times U_1\times U_2\times U_2\rightarrow {\mathbb R}^n$, $\tilde g: {\cal T} \times \Omega \times {\mathbb R}^n \times {\mathbb R}^n
 \times U_1 \times U_1\times U_2\times U_2\rightarrow {\mathbb R}^n $, $h: {\cal T} \times \Omega \times {\mathbb R}^n \times {\mathbb R}^n
 \times U_1 \times U_1\times U_2\times U_2 \rightarrow {\mathbb R}$,  are given random mapping with ${
U}_1\subset R^{k_1}$ and ${U}_2\subset R^{k_2}$  being two given nonempty convex sets. In the above, $u_1(\cdot)$ and $u_2(\cdot)$ are partial observed stochastic processes called admissible control processes of Player 1 and Player 2 respectively defined as follows.

 \begin{defn}
     For $i=1,2,$ a partial observed   admissible control process for Player $i$ is defined as a
   stochastic  process $u_i:{\cal T}\times  \Omega\longrightarrow  U_i$ which is
   $\{\mathscr{F}^Y_t\}_{t\in \cal T}$
-adapted   and satisfies
\begin{equation} \label{eq:1.33}
  \mathbb E\bigg[\bigg(\int_0^T|u_i(t)|^2
dt\bigg)^{2}\bigg]<\infty.
\end{equation}
The set of all admissible controls
is denoted by ${\cal  A}_{i}^W, i=1,2.$
The control pair $(u_1(\cdot), u_2(\cdot))
\in {\cal  A}_{1}^W \times {\cal  A}_{2}^W$
is called a pair of admissible controls of  players.
\end{defn}

Now we make the following standard  assumptions
 on the coefficients of the equations \eqref{eq:1.1}
 and \eqref{eq:1.2}.

 \begin{ass}\label{ass:1.1}
The coefficients $b$, $g,\tilde g$ and $h$  are ${\mathscr P} \otimes {\cal
B} ({\mathbb R}^n) \otimes {\mathscr B} ({\mathbb R}^n) \otimes {\mathscr B}
(U_1) \otimes {\mathscr B}
(U_1)\otimes {\mathscr B}
(U_2) \otimes {\mathscr B}
(U_2)  $-measurable. For each $(x,y,
u_1,v_1, u_2, v_2) \in \mathbb {R}^n\times \mathbb R^n \times U_1 \times U_1\times U_2\times U_2$, $b (\cdot, x,y,
u_1,v_1, u_2, v_2), g(\cdot,x,y,u_1, v_1, u_2,v_2)$, $\tilde g (\cdot, x, y, u_1,v_1, u_2,v_2)$
and $h(\cdot, x, y, u_1,v_1,u_2,v_2)$ are all $\{\mathscr{F}_t\}_{t\in \cal T}$-adapted processes. For almost all $(t, \omega)\in {\cal T}
\times \Omega$, the mapping
\begin{eqnarray*}
(x,y,u_1,v_1,u_2,v_2) \rightarrow \varphi(t,\omega,x, y, u_1,v_1,u_2,v_2)
\end{eqnarray*}
 is continuous differentiable with respect to $(x, y,u_1,v_1,u_2,v_2)$ with
appropriate growths, where $\varphi=b, g,  \tilde g$ and $h.$  More precisely, there exists a constant $C
> 0$  such that for  all $x,y\in \mathbb
R^n, u_1,v_1\in U_1,u_2,v_2\in U_2$ and a.e. $(t, \omega)\in {\cal T} \times \Omega,$
\begin{eqnarray*}
\left\{
\begin{aligned}
& (1+|x|+|y|+|u_1|+|v_1|+|u_2|+|v_2|)^{-1}
|\phi(t,x,y,u_1,v_1,u_2,v_2)|
+|\phi_x(t,x,y,u_1,v_1,u_2,v_2)|
\\&\quad\quad+|\phi_y(t,x,y,u_1,v_1,u_2,v_2)|
+|\phi_{u_1}(t,x,y,u_1,v_1,u_2,v_2)|
+|\phi_{v_1}(t,x,y,u_1,v_1,u_2,v_2)|
\\&\quad\quad+|\phi_{u_2}(t,x,y,u_1,v_1,u_2,v_2)|
+|\phi_{v_2}(t,x,y,u_1,v_1,u_2,v_2)|\leq C, \varphi=b, g,\tilde g;
\\&|h(t,x,y,u_1,v_1,u_2,v_2)| +|h_{x}(t,x,y,u_1,v_1,u_2,v_2)|
+|h_{y}(t,x,y,u_1,v_1,u_2,v_2)|
+|h_{u_1}(t,x,y,u_1,v_1,u_2,v_2)|
\\&\quad\quad+|h_{v_1}(t,x,y,u_1,v_1,u_2,v_2)|
+|h_{u_2}(t,x,y,u_1,v_1,u_2,v_2)|
+|h_{v_2}(t,x,y,u_1,v_1,u_2,v_2)|\leq
C.
\end{aligned}
\right.
\end{eqnarray*}
\end{ass}
Now under Assumption \ref{ass:1.1}, we
begin to discuss  the well- posedness
of \eqref{eq:1.1} and \eqref{eq:1.2}.
Indeed, putting \eqref{eq:1.2} into the state equation \eqref{eq:1.1}, we get that
\begin{equation} \label{eq:1.3}
\displaystyle\left\{
\begin{array}{lll}
dx(t) & = & [(b-{\tilde g}h)(t,x(t),\mathbb E [ x(t)],u_1(t),
\mathbb E[u_1(t)], u_2(t), \mathbb E[u_2(t)])]dt
\\&&+\displaystyle
g(t,x(t),\mathbb E [ x(t)],u_1(t),
\mathbb E[u_1(t)], u_2(t), \mathbb E[u_2(t)])dW(t)\\&&
 +\displaystyle {\tilde g}(t,x(t),\mathbb E [ x(t)],u_1(t),
\mathbb E[u_1(t)], u_2(t), \mathbb E[u_2(t)])dY(t),
\\
\displaystyle x(0) & = & a.
\end{array}
\right.
\end{equation}
Under Assumption \ref{ass:1.1}, for any $(u_1(\cdot), u_2(\cdot))
\in {\cal  A}_{1}^W \times {\cal  A}_{2}^W,$  by
Lemma 1.4 in Tang and Meng (2016),
\eqref{eq:1.3} admits a strong solution
$x(\cdot)\equiv x^{(u_1,u_2)}(\cdot)\in S^4_{\mathscr F}(0, T; \mathbb R^n).$  On the other hand, for any $(u_1(\cdot), u_2(\cdot))
\in {\cal  A}_{1}^W \times {\cal  A}_{2}^W,$ associated with
the corresponding solution $x^{(u_1,u_2)}(\cdot)$ of
\eqref{eq:1.3},  introduce  a stochastic
process $Z^{(u_1,u_2)}(\cdot)$  defined by the
unique solution  of the following mean-field
SDE \begin{equation}\label{eq:3.4}
\displaystyle\left\{
\begin{array}{lll}
dZ^{(u_1,u_2)}(t) & = & Z^{(u_1,u_2)}(t)h(t,x^{(u_1,u_2)}(t), \mathbb E [ x^{(u_1,u_2)}(t)],u_1(t),
\mathbb E[u_1(t)], u_2(t), \mathbb E[u_2(t)] )dY(t), \\
\displaystyle Z^{(u_1,u_2)}(0) & = & 1.
\end{array}%
\right.
\end{equation}
Define a new probability measure $\mathbb P^{(u_1,u_2)}$ on $(\Omega, \mathscr F)$ by
$d\mathbb P^{(u_1,u_2)}=Z^{(u_1,u_2)}(1)d\mathbb P.$  Then from Girsanov's theorem and \eqref{eq:1.2}, $(W(\cdot),W^{(u_1,u_2)}(\cdot))$ is an
$\mathbb R^2$-valued standard Brownian motion defined in the new probability
space $(\Omega, \mathscr F, \{\mathscr{F}_t\}_{0\leq t\leq T},\mathbb P^{(u_1,u_2)}).$
So $(\mathbb P^{(u_1,u_2)}, X^{(u_1,u_2)}(\cdot), Y(\cdot), W(\cdot), W^{(u_1,u_2)}(\cdot))$ is a weak
solution on $(\Omega, \mathscr F, \{\mathscr{F}_t\}_{t\in \cal
T})$ of  \eqref{eq:1.1} and
\eqref{eq:1.2}.

Now for  any given pair of  admissible control
 $(u_1(\cdot), u_2(\cdot))
\in {\cal  A}_{1}^W \times {\cal  A}_{2}^W,$   and the  corresponding
 weak solution  $(\mathbb P^{(u_1,u_2)},x^{(u_1,u_2)}(\cdot),  Y(\cdot), W(\cdot), W^{(u_1,u_2)}(\cdot))$ of \eqref{eq:1.1} and
 \eqref{eq:1.2}, we introduce the following  cost functional in the weak form,
\begin{equation}\label{eq:1.6}
\begin{split}
J(u_1(\cdot ), u_2(\cdot))=& \mathbb E^{(u_1,u_2)}\displaystyle\bigg[
\int_{0}^{T}l(t,x(t),\mathbb E [ x(t)], u_1(t),
\mathbb E[u_1(t)], u_2(t), \mathbb E[u_2(t)])dt \\
& ~~~~~+m(x(T),\mathbb E [ x(T)])\bigg].
\end{split}%
\end{equation}
where $\mathbb E^{(u_1,u_2)}$ denotes the expectation with respect to the
probability space $(\Omega, \mathscr F, \{\mathscr{F}_t\}_{0\leq
t\leq T},\mathbb P^{(u_1, u_2)})$ and  $l:
{\cal T} \times \Omega \times {\mathbb R}^n \times {\mathbb R}^n
 \times U_1 \times U_1\times U_2\times U_2\rightarrow {\mathbb R},$ $m: \Omega \times {\mathbb R}^n \times {\mathbb R}^n \rightarrow {\mathbb R}$  are given random mappings
satisfying  the following assumption:

\begin{ass}\label{ass:1.2}
 $l$ is ${\mathscr P} \otimes {\cal
B} ({\mathbb R}^n) \otimes {\mathscr B} ({\mathbb R}^n) \otimes {\mathscr B}
(U_1) \otimes {\mathscr B}
(U_1)\otimes {\mathscr B}
(U_2) \otimes {\mathscr B}
(U_2) $-measurable, and $m$ is ${\cal F}_T \otimes {\mathscr B} ({\mathbb
R}^n) \otimes {\mathscr B} ({\mathbb R}^n)$-measurable. For each $(x,y,
u_1,v_1, u_2,v_2) \in \mathbb {R}^n\times \mathbb R^n \times U_1\times U_1\times U_2\times U_2$, $f (\cdot, x, y, u_1,v_1, u_2, v_2)$ is  an ${\mathbb F}$-adapted process, and $m(x,y)$
is an ${\cal F}_{T}$-measurable random variable . For almost all $(t, \omega)\in [0,T]
\times \Omega$, the mappings
\begin{eqnarray*}
(x,y,u_1,v_1, u_2,v_2) \rightarrow l(t,\omega,x, y, u_1,v_1,u_2,v_2)
\end{eqnarray*}
and
\begin{eqnarray*}
(x,y) \rightarrow m(\omega,x, y)
\end{eqnarray*}
are continuous differentiable with respect to $(x, y,u_1,v_1, u_2, v_2)$ with
appropriate growths, respectively.
More precisely, there exists a constant $C
> 0$  such that for  all $x,y\in \mathbb
R^n, u_1,v_1\in U_1, u_2,v_2 \in U_2$ and a.e. $(t, \omega)\in [0,T]
\times \Omega,$
\begin{eqnarray*}
\left\{
\begin{aligned}
&
(1+|x|+|y|+|u_1|+|v_1|+|u_2|+|v_2|)^{-1} (|l_x(t,x,y,u_1,v_1,u_2,v_2)|
+|l_y(t,x,y,u_1,v_1,u_2,v_2)|
\\&\quad\quad+|l_{u_1}(t,x,y,u_1,v_1, u_2,v_2)|
+|l_{v_1}(t,x,y,u_1,v_1,u_2,v_2)|
+|l_{u_2}(t,x,y,u_1,v_1,u_2,v_2)|+|l_{v_2}(t,x,y,u_1,v_1,u_2,v_2)|)
\\&\quad\quad+(1+|x|^2+|y|^2+|u_1|^2+|v_1|^2
+|u_2|^2+|v|^2)^{-1}|l(t,x,y,u_1,v_1,u_2,v_2)|
 \leq C;
\\
& (1+|x|^2+|y|^2)^{-1}|m(x,y)| +(1+|x|+|y|)^{-1}(|m_x(x,y)|+|m_y(x,y)|)\leq
C.
\end{aligned}
\right.
\end{eqnarray*}
\end{ass}
 Under  Assumption \ref{ass:1.1} and
 \ref{ass:1.2},
 by the estimates in Lemma 1.4 in Tang and Meng (2016),  we get  that
the cost functional is well-defined.

Then we  can put forward the following partially observed  two-person zero-sum differential  game problem  in its weak formulation,
 i.e., with changing
 the reference probability space $(\Omega, \mathscr F, \{\mathscr{F}_t\}_{0\leq
t\leq T},\mathbb P^u),$ as follows.

\begin{pro}
\label{pro:1.1}
Find a pair of admissible control pair
$(\bar{u}_1(\cdot), \bar{u}_2(\cdot))$ over  ${\cal A}_1^W\times{\cal A}_2^W$
such that
\begin{equation}\label{c1}
J(\bar{u}_1(\cdot), \bar{u}_2(\cdot))=\sup_{u_2(\cdot)\in {\cal
A}_2^W}\left(\inf_{u_1(\cdot)\in {\cal
A}_1^W}J(u_1(\cdot),u_2(\cdot))\right)=\inf_{
u_1^W(\cdot)\in {\cal
A}_1^W}\left(\inf_{u_2(\cdot)\in {\cal
A}_2^W}J(u_1(\cdot),u_2(\cdot))\right),
\end{equation}
subject to the
 state equation \eqref{eq:1.1}, the
 observation equation \eqref{eq:1.2}
 and the cost functional \eqref{eq:1.6}.

\end{pro}

Obviously, according to  Bayes' formula,
 the cost functional \eqref{eq:1.6} can be rewritten as
\begin{equation}\label{eq:1.7}
\begin{split}
J(u_1(\cdot ), u_2(\cdot))=& \mathbb E\displaystyle\bigg[%
\int_{0}^{T}Z^{(u_1, u_2)}(t)l(t,x(t),\mathbb E [ x(t)], u_1(t),
\mathbb E[u_1(t)], u_2(t), \mathbb E[u_2(t)])dt \\
& ~~~~~+Z^{(u_1,u_2)}(T)m(x(T),\mathbb E [ x(T)])\bigg].
\end{split}
\end{equation}
  Therefore, we  can translate   Problem \ref{pro:1.1}
  into  the following  equivalent optimal control problem in
  its strong formulation, i.e., without changing the  reference
probability space $(\Omega, \mathscr F, \{\mathscr{F}_t\}_{0\leq t\leq T},\mathbb P),$
where $Z^{(u_1,u_2)}(\cdot)$ will be regarded as
an additional  state process besides the
state process $x^{(u_1,u_2)}(\cdot).$

\begin{pro}
\label{pro:1.2} Find an admissible open-loop control
$(\bar{u}_1(\cdot), \bar{u}_2(\cdot))$ over  ${\cal A}_1^W\times{\cal A}_2^W$
such that
\begin{equation}\label{eq:2.9}
J(\bar{u}_1(\cdot), \bar{u}_2(\cdot))=\sup_{u_2(\cdot)\in {\cal
A}_2^W}\left(\inf_{u_1(\cdot)\in {\cal
A}_1^W}J(u_1(\cdot),u_2(\cdot))\right)
=\inf_{
u_1(\cdot)\in {\cal
A}_1^W}\left(\inf_{u_2(\cdot)\in {\cal
A}_2^W}J(u_1(\cdot),u_2(\cdot))\right),
\end{equation}
 subject to
 the cost functional \eqref{eq:1.7}
 and  the following
 state equation
\begin{equation}
\displaystyle\left\{
\begin{array}{lll}
dx(t) & = & [(b-{\tilde
g}h)(t,x(t),
u_1(t),
\mathbb E[u_1(t)], u_2(t), \mathbb E[u_2(t)]]dt
\\&&+\displaystyle
g(t,x(t),\mathbb E [x(t)],u_1(t),
\mathbb E[u_1(t)], u_2(t), \mathbb E[u_2(t)])dW(t)
 \\&&+\displaystyle {\tilde g}(t,x(t),\mathbb E[ x(t)],u_1(t),
\mathbb E[u_1(t)], u_2(t), \mathbb E[u_2(t)])dY(t),
 \\
dZ(t) & = & Z(t)h(t,x(t),\mathbb E [ x(t)],u_1(t),
\mathbb E[u_1(t)], u_2(t), \mathbb E[u_2(t)])dY(t), \\
\displaystyle Z(0) & = & 1,
\\
\displaystyle x(0) & = & a\in \mathbb{R}^n.
\end{array}%
\right.  \label{eq:3.7}
\end{equation}
\end{pro}
Any $(\bar{u}_1(\cdot), \bar{u}_2(\cdot))
 \in {\cal A}_1^W\times {\cal A}_2^W$ satisfying
 \eqref{eq:2.9} is  called an
optimal open-loop control. The corresponding strong solution  $(\bar{x}(\cdot), \bar Z(\cdot)) $ of
 \eqref{eq:3.7} is called   the optimal state process.  Then
$(\bar{u}_1(\cdot),\bar{u}_2(\cdot); \bar{x}(\cdot), \bar{Z}(\cdot))$ is
called an optimal pair.

Roughly speaking, for the zero-sum  differential game, Player I seek
control $\bar {u}_1(\cdot)$ to minimize \eqref{eq:1.7}, and Player II seek
control $\bar{u}_2(\cdot)$ to maximize \eqref{eq:1.7}. Therefore, \eqref{eq:1.7}
represents the cost for Player I and the payoff for Player II.  If $(\bar{u}_1(\cdot),\bar{u}_2(\cdot))$ is an
optimal open-loop control, it is easy to check that
\begin{eqnarray}
J(\bar{u}_1(\cdot), {u}_2(\cdot))\leq J(\bar{u}_1(\cdot),
\bar{u}_2(\cdot))\leq J(u_1(\cdot), \bar{u}_2(\cdot))
\end{eqnarray}
for all admissible open-loop controls
$(u_1(\cdot),{u}_2(\cdot))\in {\cal A}_1\times{\cal A}_2$. We
refer to $(\bar{u}_1(\cdot), \bar{u}_2(\cdot))$ as an open-loop
saddle.  On the other hand, if $(\bar{u}_1(\cdot), \bar{u}_2(\cdot))$  is  an open-loop
saddle, by  Remark 1.2 of Chapter VI in [6], we know that

\begin{equation}\label{eq:2.9}
\sup_{u_2(\cdot)\in {\cal
A}_2^W}\left(\inf_{u_1(\cdot)\in {\cal
A}_1^W}J(u_1(\cdot),u_2(\cdot))\right)
=\inf_{
u_1(\cdot)\in {\cal
A}_1^W}\left(\inf_{u_2(\cdot)\in {\cal
A}_2^W}J(u_1(\cdot),u_2(\cdot))\right).
\end{equation}

In this paper,  provided  the  original
sate equation   \eqref{eq:1.1} and
the observation equation  \eqref{eq:1.2},
 we will also study the partially observed  optimal control
problem  in its strong formulation, i.e.
without changing the  reference
probability space $(\Omega, \mathscr F, \{\mathscr{F}_t\}_{0\leq t\leq T},\mathbb P).$
Precisely,  different from
the  cost functional \eqref{eq:1.6},
  the cost functional in this case  is defined by
\begin{equation}\label{eq:1.10}
\begin{split}
J(u_1(\cdot ), u_2(\cdot))=& \mathbb E\displaystyle\bigg[%
\int_{0}^{T}l(t,X(t),\mathbb E [ x(t)], u_1(t),
\mathbb E[u_1(t)], u_2(t), \mathbb E[u_2(t)])dt \\
& ~~~~~+m(X(T),\mathbb E [ x(T)])\bigg].
\end{split}
\end{equation}
Note that $\mathbb E(\cdot)$ is
the expectation with  the original
probability $\mathbb P$ independent
of the control $(u_1(\cdot), u_2(\cdot)) .$   In
this case,   different from the
partially observed  optimal control problem in weak sense
discussed before, we do not need
require the pair of admissible
control processes satisfies \eqref{eq:1.33}. In this case, a pair of   admissible
control processes $(u_1(\cdot), u_2(\cdot))$ is defined as a $\{\mathscr{F}^Y_t\}_{0\leq t\leq T}$
adapted stochastic process valued in $U_1\times U_2$
satisfying
\begin{equation} \label{eq:2.14}
  \mathbb E\bigg[\int_0^T|u_1(t)|^2
dt\bigg]+\mathbb E\bigg[\int_0^T|u_2(t)|^2
dt\bigg]<\infty.
\end{equation}
The set of all admissible controls in this case
is denoted by ${\cal A}_1^S\times {\cal A}_2^S.$

Then we  can put forward the partially observed two-person zero-sum differential game problem   in its strong formulation as follows.
\begin{pro}
\label{pro:4.1} Find an admissible open-loop control
$(\bar{u}_1(\cdot), \bar{u}_2(\cdot))$ over  ${\cal A}_1^W\times{\cal A}_2^W$
such that
\begin{equation}\label{eq:2.15}
J(\bar{u}_1(\cdot), \bar{u}_2(\cdot))=\sup_{u_2(\cdot)\in {\cal
A}_2^S}\left(\inf_{u_1(\cdot)\in {\cal
A}_1^S}J(u_1(\cdot),u_2(\cdot))\right)
=\inf_{
u_1(\cdot)\in {\cal
A}_1^S}\left(\inf_{u_2(\cdot)\in {\cal
A}_2^S}J(u_1(\cdot),u_2(\cdot))\right),
\end{equation}
 subject to
 the cost functional \eqref{eq:1.7}
 and  the following
 state equation
\begin{equation}
\displaystyle\left\{
\begin{array}{lll}
dx(t) & = & [(b-{\tilde
g}h)(t,x(t),
u_1(t),
\mathbb E[u_1(t)], u_2(t), \mathbb E[u_2(t)]]dt
\\&&+\displaystyle
g(t,x(t),\mathbb E [x(t)],u_1(t),
\mathbb E[u_1(t)], u_2(t), \mathbb E[u_2(t)])dW(t)
 \\&&+\displaystyle {\tilde g}(t,x(t),\mathbb E[ x(t)],u_1(t),
\mathbb E[u_1(t)], u_2(t), \mathbb E[u_2(t)])dY(t),
\\
\displaystyle x(0) & = & a\in \mathbb{R}^n.
\end{array}%
\right.  \label{eq:2.16}
\end{equation}
\end{pro}

Note that  under Assumptions\ref{ass:1.1}
and \ref{ass:1.2},
for any admissible control $({u}_1(\cdot), {u}_2(\cdot))
 \in {\cal A}_1^S\times {\cal A}_2^S$, 
by Lemma 1.4 in Tang and Meng (2016),
the state \eqref{eq:3.7} has a unique solution
$x(\cdot)\in S_{\mathscr F}^2(0,T; \mathbb R^n)$
and $J(u_1(\cdot), u_2(\cdot)) < \infty,$  so Problem \ref{pro:4.1} is well-defined.

\section{Stochastic Maximum Principle  For Zero-Sum Differential Games  in Weak Formulation }

\noindent

  This section is devoted to establishing
   the stochastic maximum principle of
   Problem \ref{pro:1.1} or   Problem \ref{pro:1.2},
   i.e., establishing the necessary optimality condition of Pontryagin's type for
   a pair of  admissible controles to be optimal. To this end,
for the state equation \eqref{eq:3.7},
we first introduce the
corresponding adjoint equation.
Actually, define
the Hamiltonian function $H:[0,T]\times \Omega \times
\mathbb{R}^n \times \mathbb{R}^n \times U_1 \times U_2\times U_2\times U_2
\times \mathbb{R}^n \times \mathbb{R}^n
\times \mathbb{R}^n \times \mathbb R
\longrightarrow \mathbb{R}$ by
\begin{eqnarray} \label{eq:2.1}
\begin{split}
  &H(t,x,y,u_1,v_1,u_2,v_2,p,q, \tilde {q}, \tilde R)
\\&=\langle p,  b(t,x,y,,u_1,v_1,u_2,v_2)\rangle +\langle q,  g(t,x,y,u_1,v_1,u_2,v_2)\rangle
+\langle  \tilde q, \tilde g(t,x,y,u_1,v_1,u_2,v_2)\rangle
\\&+ \tilde Rh(t,x,y, u_1,v_1,u_2,v_2)+l(t,x,y,u_1,v_1,u_2,v_2).
\end{split}
\end{eqnarray}
For the state equation \eqref{eq:3.7}
associated with any given admissible pair
$(\bar u_1(\cdot), \bar u_2(\cdot),\bar x(\cdot), \bar Z(\cdot)),$ the corresponding
adjoint equation is defined as follows:

\begin{numcases}{}\label{eq:3.10}
\begin{split}
d\bar r(t)&=-l(t,\bar x(t), \mathbb E [\bar x(t)], \bar u_1(t), \mathbb E [\bar u_1(t)],\bar u_2(t), \mathbb E [\bar u_2(t)])dt+
\bar R\left(
t\right)  dW\left(  t\right) +\bar {\tilde R}\left( t\right) dW^{\bar
u}\left( t\right),
\\
d\bar p\left(  t\right)  &=-\Big\{{H}_{x}\left(  t,\bar x(t), \mathbb E [\bar
x(t)], \bar u_1(t), \mathbb E [\bar u_1(t)],\bar u_2(t), \mathbb E [\bar u_2(t)]\right)
\\&\quad\quad+\frac{1}{z^{(u_1,u_2)}(t)}\mathbb E^{(u_1,u_2)}\big[{H}_{y}\left( t,\bar x(t), \mathbb E [\bar x(t)], \bar u_1(t), \mathbb E [\bar u_1(t)],\bar u_2(t), \mathbb E [\bar u_2(t)]\right)\big]  \Big\}dt
\\&~~~~~~+\bar q
\left(  t\right)  dW\left(  t\right)  +\bar{\tilde q}\left(  t\right)
dW^{(\bar u_1, \bar u_2) }\left( t\right),\\
\bar r(T)&=m(\bar x(T),\mathbb E[\bar x (T)]),
\\ \bar p(T)&=m_x(\bar x(T),\mathbb E[\bar x (T)])+\frac{1}{z^{(u_1,u_2)}(T)}\mathbb E^u \left[m_x(\bar x(T),\mathbb E [\bar x(T)])\right],
\end{split}
\end{numcases}
where
\begin{eqnarray} \label{eq:3.11}
\begin{split}
  &H(t,x,y,u_1,v_1, u_2,v_2)
  \\&=:{ H}(t,x, y,u_1,v_1, u_2,v_2, \bar p(t),
  \bar q(t), \bar {\tilde q}(t),\bar{\tilde R}(t)-\tilde g(t,\bar x(t), \mathbb E
[\bar x(t)], \bar u_1(t), \mathbb E [\bar u_1(t)],
\bar u_2(t), \mathbb E [\bar u_2(t)] )^\top \bar p(t)).
\end{split}
\end{eqnarray}
Note the adjoint equation \eqref{eq:3.10}
is a mean-field backward
stochastic differential equation whose solution
consists of  an 6-tuple process $(\bar p(\cdot),\bar q(\cdot),\bar {\tilde q}(\cdot),\bar r(\cdot),\bar R(\cdot),\bar {\tilde R}(\cdot ) ).$
Under Assumptions \ref{ass:1.1} and
\ref{ass:1.2},  by Buckdahn (2009b),
it is easily to see that  the adjoint equation \eqref{eq:3.10} admits
a unique solution $(\bar p(\cdot),\bar q(\cdot),\bar {\tilde q}(\cdot),\bar r(\cdot),\bar R(\cdot),\bar {\tilde R}(\cdot) )\in S_{\mathscr{F}}^2(0,T;
\mathbb R^n)\times M_{\mathscr{F}}^2(0,T;
\mathbb R^n) \times M_{\mathscr{F}}^2(0,T;
\mathbb R^n)\times S_{\mathscr{F}}^2(0,T;
\mathbb R)\times M_{\mathscr{F}}^2(0,T;
\mathbb R) \times M_{\mathscr{F}}^2(0,T;
\mathbb R),
$  also called the adjoint process corresponding
the admissible pair $(\bar{u}(\cdot ); \bar{x}(\cdot ), \bar Z(\cdot))$.

Now we are in a position to
state our main result:  stochastic
maximum principle of  Problem \ref{pro:1.1}  or \ref{pro:1.2}.

\begin{thm}
 Let assumptions \ref{ass:1.1} and \ref{ass:1.2} be satisfied.  Let $(\bar{u}_1(\cdot), \bar{u}_2(\cdot))$
  be an optimal open-loop control  of  Problem \ref{pro:1.1} or \ref{pro:1.2}. Suppose that
    $(\bar{x}(\cdot ), \bar Z(\cdot))$ is
 the state process of the system (\ref{eq:3.7}) corresponding to $(\bar{u}_1(\cdot), \bar{u}_2(\cdot)).$
 Let $(\bar p(\cdot),\bar q(\cdot),\bar {\tilde q}(\cdot),\bar r(\cdot),\bar R(\cdot),\bar {\tilde R}(\cdot) )$ be  the unique
 solution of the  adjoint equation (\ref{eq:3.10}) corresponding $
 (\bar{u}_1(\cdot), \bar{u}_2(\cdot);  \bar{x}(\cdot ), \bar Z(\cdot))$.
 Then
the optimality conditions
\begin{eqnarray}\label{eq:3.4}
\begin{split}
  &\Big\langle\mathbb E \big[ Z^{(\bar u_1, \bar u_2)}(t){\bar H}_{u_1}(t)|\mathscr F_t^Y]
+\mathbb E^{(\bar u_1, \bar u_2)}[{ \bar H}_{v_1}(t)], u_1-\bar u_1(t) \Big\rangle\geq 0
\end{split}
\end{eqnarray}
and

\begin{eqnarray}\label{eq:3.5}
\begin{split}
  &\Big\langle\mathbb E \big[ Z^{(\bar u_1, \bar u_2)}(t){\bar H}_{u_2}(t)|\mathscr F_t^Y]
+\mathbb E^{(\bar u_1, \bar u_2)}[{ \bar H}_{v_2}(t)], u_2-\bar u_2(t) \Big\rangle\leq 0
\end{split}
\end{eqnarray}
hold for any $(u_1, u_2)\in U_1\times U_2$  and a.e. $(t,\omega )\in \lbrack 0,T]\times \Omega .$ Here
using the notation \eqref{eq:3.11},  for $i=1,2,$ we set
\begin{equation} \label{eq:3.13}
  \bar H_{u_i}(t)={ H}_u(t, \bar x(t), \mathbb E[\bar \bar x(t)],
  \bar u_1(t),\mathbb E[\bar u_1(t)], \bar u_2(t), \mathbb E[\bar u_2(t)])
\end{equation}
and
\begin{equation}\label{eq:3.14}
  \bar H_{v_i}(t)={ H}_{v_i}(t, x(t), \mathbb E[\bar x(t)],
  u_1(t),\mathbb E[u_1(t)], u_2(t), \mathbb E[u_2(t)] ).
\end{equation}
 \end{thm}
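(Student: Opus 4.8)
The plan is to exploit the saddle structure to split the game into two decoupled single-player partially observed mean-field control problems, and then to invoke the maximum principle of Tang and Meng (2016) for each. Concretely, if $(\bar u_1(\cdot),\bar u_2(\cdot))$ is an open-loop saddle, the defining chain $J(\bar u_1,u_2)\le J(\bar u_1,\bar u_2)\le J(u_1,\bar u_2)$ shows that $\bar u_1(\cdot)$ minimizes $u_1\mapsto J(u_1,\bar u_2)$ over $\mathcal A_1^W$ while $\bar u_2(\cdot)$ maximizes $u_2\mapsto J(\bar u_1,u_2)$ over $\mathcal A_2^W$. Freezing the opponent's control, each of these is a partially observed optimal control problem for the augmented state $(x,Z)$ governed by \eqref{eq:3.7} with cost \eqref{eq:1.7}, Hamiltonian \eqref{eq:2.1}, and adjoint system \eqref{eq:3.10}, which is exactly the framework of Tang and Meng (2016). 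Before applying their result I would check that substituting $u_2=\bar u_2$ (resp. $u_1=\bar u_1$) leaves the coefficients within Assumptions \ref{ass:1.1} and \ref{ass:1.2} and preserves admissibility, which is immediate since $\bar u_2$ is $\{\mathscr F_t^Y\}$-adapted and satisfies \eqref{eq:1.33}.

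For Player 1 I would use a convex perturbation $u_1^\rho=\bar u_1+\rho(u_1-\bar u_1)$, $\rho\in[0,1]$, which is admissible because $U_1$ is convex and $u_1-\bar u_1$ is $\{\mathscr F_t^Y\}$-adapted. Differentiating the state system \eqref{eq:3.7} in $\rho$ produces linear variational SDEs for $(\delta x,\delta Z)$; pairing these with the adjoint solution $(\bar p,\bar q,\bar{\tilde q},\bar r,\bar R,\bar{\tilde R})$ of \eqref{eq:3.10} via It\^o's formula and the duality relation cancels all state-variation terms and leaves $\frac{d}{d\rho}J(u_1^\rho,\bar u_2)\big|_{\rho=0^+}$ expressed through $\bar H_{u_1}$ and $\bar H_{v_1}$. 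Since $\bar u_1$ is a minimizer this derivative is nonnegative, yielding the first optimality condition. The two structural features enter here: the mean-field dependence through $\mathbb E[u_1]$ contributes the additional $\mathbb E^{(\bar u_1,\bar u_2)}[\bar H_{v_1}]$ term arising from the expectation of the adjoint paired against the law-variation, while the partial-observation constraint forces the projection of the $x$-gradient onto $\mathscr F_t^Y$, which together with the Bayes conversion between $\mathbb E$ and $\mathbb E^{(\bar u_1,\bar u_2)}$ (weighting by $Z^{(\bar u_1,\bar u_2)}$) produces the conditional term $\mathbb E[Z^{(\bar u_1,\bar u_2)}\bar H_{u_1}\mid\mathscr F_t^Y]$.

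The argument for Player 2 is verbatim with $(u_1,U_1,v_1)$ replaced by $(u_2,U_2,v_2)$, the only change being that $\bar u_2$ is a maximizer, so the sign of the variational derivative is reversed and the inequality becomes $\le 0$, giving the second optimality condition \eqref{eq:3.5}.

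I expect the main obstacle to be the bookkeeping in the duality step: keeping track of where the density $Z^{(\bar u_1,\bar u_2)}$ and the two distinct expectations ($\mathbb E$ versus $\mathbb E^{(\bar u_1,\bar u_2)}$, and the conditioning on $\mathscr F_t^Y$) must appear so that the raw gradient $\langle \nabla_{u_i}H,\,u_i-\bar u_i\rangle$ collapses into the stated combination of an $\mathscr F_t^Y$-conditional term and a mean-field expectation term. The simultaneous presence of the $Z$-adjoint $(\bar r,\bar R,\bar{\tilde R})$ and the $x$-adjoint $(\bar p,\bar q,\bar{\tilde q})$ in \eqref{eq:3.10}, coupled through the $\tilde R h$ term in \eqref{eq:2.1} and the shift $\bar{\tilde R}-\tilde g^\top\bar p$ in \eqref{eq:3.11}, makes this the computationally delicate part; everything else follows from the single-agent theorem applied twice.
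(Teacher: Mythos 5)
Your proposal is correct and follows essentially the same route as the paper: use the saddle-point inequalities to freeze the opponent's control, observe that each of the resulting single-player partially observed mean-field control problems has the same state, Hamiltonian, and adjoint system as the game, and then invoke the necessary maximum principle of Tang and Meng (2016) for each player, with the reversed inequality for the maximizing player. The additional variational/duality sketch you give is just an unpacking of the cited theorem's proof and does not change the argument.
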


 {\bf Proof:} Since  $(\bar{u}_1(\cdot),\bar{u}_2(\cdot))$ is an
optimal open-loop control, then  $(\bar{u}_1(\cdot),\bar{u}_2(\cdot))$
is an open-loop
saddle point, i.e.,
\begin{eqnarray}
J(\bar{u}_1(\cdot), {u}_2(\cdot))\leq J(\bar{u}_1(\cdot),
\bar{u}_2(\cdot))\leq J(u_1(\cdot), \bar{u}_2(\cdot)).
\end{eqnarray}
So we have \begin{equation}\label{eq:3.9}
    J_1(\bar{u}_1(\cdot),
\bar{u}_2(\cdot))=\displaystyle\min_{u_1(\cdot)\in {\cal
A}_1^W}J_1(u_1(\cdot), \bar{u}_2(\cdot)) \end{equation}
and
\begin{equation}\label{eq:3.100}
 J_2(\bar{u}_1(\cdot),
\bar{u}_2(\cdot))=\displaystyle\max_{u_2(\cdot)\in {\cal
A}_2^W}J_2(\bar u_1(\cdot), {u}_2(\cdot)).
\end{equation}
By (\ref{eq:3.9}),  $\bar u_1(\cdot)$  can be regarded as
an optimal control  of an
optimal control problem where
the controlled system is

\begin{equation}
\displaystyle\left\{
\begin{array}{lll}
dx(t) & = & [(b-{\tilde
g}h)(t,x(t),
u_1(t),
\mathbb E[u_1(t)], \bar u_2(t), \mathbb E[\bar u_2(t)]]dt
\\&&+\displaystyle
g(t,x(t),\mathbb E [x(t)],u_1(t),
\mathbb E[u_1(t)], \bar u_2(t), \mathbb E[\bar u_2(t)])dW(t)
 \\&&+\displaystyle {\tilde g}(t,x(t),\mathbb E[ x(t)],u_1(t),
\mathbb E[u_1(t)], \bar u_2(t), \mathbb E[\bar u_2(t)])dY(t),
 \\
dZ(t) & = & Z(t)h(t,x(t),\mathbb E [ x(t)], u_1(t),
\mathbb E[u_1(t)], \bar u_2(t), \mathbb E[\bar u_2(t)])dY(t), \\
\displaystyle Z(0) & = & 1,%
\\
\displaystyle x(0) & = & a\in \mathbb{R}^n,
\end{array}%
\right.  \label{eq:3.111}
\end{equation}

 and the cost functional
is \begin{equation}\label{eq:3.12}
\begin{split}
J(u_1(\cdot ),\bar u_2(\cdot))=& \mathbb E\displaystyle\bigg[%
\int_{0}^{T}Z^{(u_1, \bar u_2)}(t)l(t,x(t),\mathbb E [ x(t)], u_1(t),
\mathbb E[u_1(t)], \bar u_2(t), \mathbb E[\bar u_2(t)])dt \\
& ~~~~~+Z^{(u_1,\bar u_2)}(T)m(x(T),\mathbb E [ x(T)])\bigg].
\end{split}
\end{equation}
Then for this case, it is easy to check that the Hamilton is  $H(t,x,y,u_1,v_1,\bar u_2(t),
 \mathbb E [\bar u_2(t)],p,q, \tilde {q}, \tilde R)$
and  for the optimal control
$\bar u_1(\cdot)\in {\cal A}_1^W,$   the corresponding optimal  sate process and the adjoint process
is still $\bar{x}(\cdot)$ and $(\bar p(\cdot),\bar q(\cdot),\bar {\tilde q}(\cdot),\bar r(\cdot),\bar R(\cdot),\bar {\tilde R}(\cdot) ),$  respectively.
Thus applying the     partial
necessary stochastic maximum principle for  optimal control problems
(see Theorem 2.1 in  Tang and Meng (2016)), we can obtain \eqref{eq:3.4}.
Similarly, from (\ref{eq:3.100}), we can obtain (\ref{eq:3.5}). The proof is complete.

\section{ Stochastic Maximum Principle For Zero-Sum Differential Games in Strong  Formulation }

This section is devoted to establishing
the stochastic maximum principles of
Problem \ref{pro:4.1}.  In this case, the Hamiltonian
$H:[0,T]\times \Omega \times
\mathbb{R}^n \times
\mathbb{R}^n\times U_1\times U_1\times U_2\times U_2\times
\mathbb{R}^n\times
\mathbb{R}^n\times
\mathbb{R}^n  %
\rightarrow \mathbb{R}$ is defined  by
\begin{eqnarray} \label{eq:4.3}
\begin{split}
  H(t,x,y,u_1,v_1,u_2,v_2,p,q, \tilde {q})=&\langle p,  b(t,x,y,u_1,v_1,u_2,v_2)-\tilde g(t,x,y,u_1,v_1, u_2,v_2)h(t, x,y, u_1,v_1,u_2,v_2)\rangle
  \\&+\langle q,
g(t,x,y,u_1,v_1,u_2,v_2)\rangle +   \langle\tilde q,\tilde g(t,x,y,u_1,v_1,u_2,v_2) \rangle \\&\quad+l(t,x,y,u_1,v_1,u_2,v_2).
\end{split}
\end{eqnarray}
Then for any admissible pair
$(\bar u_1(\cdot), \bar u_2(\cdot); \bar x(\cdot)),$  the
corresponding  adjoint process
 is  defined   as   the solution to  the following mean-field
BSDE:

\begin{numcases}{}\label{eq:4.4}
\begin{split}
d\bar p\left(  t\right)  &=-\bigg[{\bar H}_{x}(t)+\mathbb E[\bar H_y(t)] \bigg ]dt+\bar q\left(  t\right)  dW\left(  t\right)  +\bar{\tilde q}\left(  t\right)
dY\left( t\right)
\\ \bar P(T)&= \bar m_x( T)+\mathbb E[ \bar m_{y}(T)],
\end{split}
\end{numcases}
where we have used the following
shorthand notation
\begin{eqnarray}\label{eq:3.3}
\left\{
\begin{aligned}
    \bar H(t)=&{ H}(t,\bar x(t), \mathbb E[\bar x(t)] ,\bar u_1(t),
  \mathbb E[\bar u_1(t)],\bar u_2(t),
  \mathbb E[\bar u_2(t)],\bar p(t),\bar q(t),
   {\bar{\tilde q}}(t)),\\
   \bar m(T)=& m(\bar x(T),  \mathbb E[\bar x(T)]).
\end{aligned}
\right.
\end{eqnarray}
Under Assumption \ref{ass:1.1} and
\ref{ass:1.2},
by Buckdahn (2009),  \eqref{eq:4.4} admits
a unique strong  slution $(\bar p(\cdot), \bar q(\cdot),
\bar {\tilde q}(\cdot))\in S_{\mathscr{F}}^2 (0,T;\mathbb R^{n})\times M_{\mathscr{F}}^2(0,T;\mathbb R^n)
\times M_{\mathscr{F}}^2(0,T;\mathbb R^n),$
which is also called the adjoint process corresponding
to the admissible pair $(u_1(\cdot), u_2(\cdot), x(\cdot))$

\subsection{ Sufficient Conditions of Optimality }
In this subsection, we are going to establish
the sufficient Pontryagin maximum principle of Problem  \ref{pro:4.1}.

\begin{thm}{\bf [Sufficient Stochastic Maximum Principle ] } \label{thm:4.3}
Let Assumptions \ref{ass:1.1} and
\ref{ass:1.2} be
satisfied. Let $(\bar{u}_1(\cdot), \bar{u}_2(\cdot);
\bar{x}(\cdot))$ be an admissible pair
and $(\bar p(\cdot), \bar q(\cdot), \bar{\tilde q}(\cdot))$ be  the  unique strong solution of the
corresponding adjoint  equation (\ref{eq:4.4}).\\
(i) Suppose that, for all $t\in [0,T],$  $m(x,y)$ is convex in $(x,y)$,  and the mapping
$$
(x,y,u_1,v_1)\mapsto
H( t, x, y, u_1,v_1, \bar u_2(t),\mathbb E[\bar u_2(t)], {\bar p} (t), {\bar q} (t), \bar {\tilde q}(t) )
$$ is convex.  For  any $u_1(\cdot)\in {\cal A}_1^S,$
\begin{eqnarray} \label{eq:4.8}
  \mathbb E\bigg[\langle  u_1 (t) - \bar u_1 (t), \bar H_{u_1}(t)+\mathbb E [\bar H_{v_1}(t)]\rangle\bigg]
\geq 0.
\end{eqnarray}
Then
$$
J(u_1(\cdot),\bar{u}_2(\cdot)) \geq J(\bar{u}_1(\cdot),\bar{u}_2(\cdot))
,~~~for~~all ~~u_1(\cdot)\in{\cal
A}_1^S
$$
and
$$
J(\bar{u}_1(\cdot),\bar{u}_2(\cdot))=\displaystyle\inf_{u_1(\cdot)\in{\cal
A}_1^S}J(u_1(\cdot),\bar{u}_2(\cdot)).
$$
(ii) Suppose that, for all $t\in [0,T],$ $m(x,y)$ is concave in $(x,y)$  and
$$
(x,y,u_2,v_2)\mapsto
H( t, x, y, \bar u_1(t), \mathbb E[\bar u_1(t)], u_2,v_2, {\bar p} (t), {\bar q} (t), \bar {\tilde q}(t) )
$$
is concave. For  any $u_2(\cdot)\in {\cal A}_2^S,$
\begin{eqnarray} \label{eq:4.80}
  \mathbb E\bigg[\langle  u_2 (t) - \bar u_2 (t), \bar H_{u_2}(t)+\mathbb E [\bar H_{v_2}(t)]\rangle\bigg]
\leq 0.
\end{eqnarray}      Then  $$ J(\bar{u}_1(\cdot),{u}_2(\cdot))\leq J(\bar{u}_1(\cdot),\bar{u}_2(\cdot))
,~~~for~~all ~~u_2(\cdot)\in{\cal
A}_2^S
$$
and
$$
J(\bar{u}_1(\cdot),\bar{u}_2(\cdot))
=\displaystyle\sup_{u_2(\cdot)\in{\cal
A}_2^S}J(\bar{u}_1(\cdot),u_2(\cdot)).
$$

(iii) If both case (i) and (ii) hold (which implies, in particular,
that $m(x,y)$  is an affine function), then
$(\bar{u}_1(\cdot),\bar{u}_2(\cdot))$ is an open-loop saddle point
and
\begin {equation}
\begin{array}{ll}
J(\bar{u}_1(\cdot), \bar{u}_2(\cdot))=\sup_{u_2(\cdot)\in {\cal
A}_2^S}\left(\inf_{u_1(\cdot)\in {\cal
A}_1^S}J(u_1(\cdot),u_2(\cdot))\right)
=\inf_{
u_1(\cdot)\in {\cal
A}_1^S}\left(\inf_{u_2(\cdot)\in {\cal
A}_2^S}J(u_1(\cdot),u_2(\cdot))\right).
\end{array}
\end {equation}
\end{thm}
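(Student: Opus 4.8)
The plan is to prove part (i) directly by showing that $J(u_1(\cdot),\bar u_2(\cdot))-J(\bar u_1(\cdot),\bar u_2(\cdot))\geq 0$ for every $u_1(\cdot)\in\mathcal A_1^S$, to obtain part (ii) by the symmetric argument with concavity and reversed inequalities, and finally to deduce (iii) by combining the two one-sided estimates. Throughout (i) the control $\bar u_2(\cdot)$ is frozen and throughout (ii) the control $\bar u_1(\cdot)$ is frozen, so each part reduces to a single-player sufficient maximum principle for the mean-field system \eqref{eq:2.16} under the original probability $\mathbb P$, driven by the common Hamiltonian \eqref{eq:4.3} and the single adjoint BSDE \eqref{eq:4.4}.

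For (i), let $x(\cdot)$ be the state for $(u_1(\cdot),\bar u_2(\cdot))$, let $\bar x(\cdot)$ be the state for $(\bar u_1(\cdot),\bar u_2(\cdot))$, and set $\delta x=x-\bar x$ with $\delta x(0)=0$. First I split the cost difference into a running part $\mathbb E\int_0^T(l-\bar l)\,dt$ and a terminal part $\mathbb E[m(x(T),\mathbb E[x(T)])-\bar m(T)]$. Convexity of $m$ in $(x,y)$ gives $\mathbb E[m-\bar m]\geq \mathbb E[\langle \bar m_x(T),\delta x(T)\rangle]+\langle \mathbb E[\bar m_y(T)],\mathbb E[\delta x(T)]\rangle$, and pulling the deterministic factor $\mathbb E[\bar m_y(T)]$ inside recombines this into $\mathbb E[\langle \bar p(T),\delta x(T)\rangle]$ through the terminal condition $\bar p(T)=\bar m_x(T)+\mathbb E[\bar m_y(T)]$ of \eqref{eq:4.4}. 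Next I apply It\^o's formula to $\langle \bar p(t),\delta x(t)\rangle$ on $[0,T]$: the diffusion increments of $\delta x$ (the differences of $g$ and $\tilde g$) pair against $\bar q$ and $\bar{\tilde q}$ in the covariation, and after taking expectation the martingale parts vanish by the $S^2_{\mathscr F}$, $M^2_{\mathscr F}$ integrability quoted for the adjoint. Recognizing, via \eqref{eq:4.3} with $\beta=b-\tilde g h$, that $\langle \bar p,\beta-\bar\beta\rangle+\langle \bar q,g-\bar g\rangle+\langle \bar{\tilde q},\tilde g-\bar{\tilde g}\rangle=H(t;u_1)-\bar H(t)-(l-\bar l)$, the running terms cancel and, after writing the adjoint's $\mathbb E[\bar H_y]$ drift in mean-field form by Fubini, I reach
\[
J(u_1,\bar u_2)-J(\bar u_1,\bar u_2)\geq \mathbb E\int_0^T\Big[H(t;u_1)-\bar H(t)-\langle \bar H_x(t),\delta x\rangle-\langle \bar H_y(t),\mathbb E[\delta x]\rangle\Big]\,dt.
\]

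Finally I invoke the assumed convexity of $(x,y,u_1,v_1)\mapsto H(t,x,y,u_1,v_1,\bar u_2(t),\mathbb E[\bar u_2(t)],\bar p,\bar q,\bar{\tilde q})$: the supporting-hyperplane inequality at $(\bar x,\mathbb E[\bar x],\bar u_1,\mathbb E[\bar u_1])$ bounds the integrand below by $\langle \bar H_{u_1},u_1-\bar u_1\rangle+\langle \bar H_{v_1},\mathbb E[u_1-\bar u_1]\rangle$, and one more application of Fubini to the $v_1$-term converts the time-integrated expectation into $\mathbb E\int_0^T\langle u_1-\bar u_1,\bar H_{u_1}+\mathbb E[\bar H_{v_1}]\rangle\,dt$, which is nonnegative by hypothesis \eqref{eq:4.8}. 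This gives $J(u_1,\bar u_2)\geq J(\bar u_1,\bar u_2)$ and the infimum characterization. Part (ii) is then obtained line for line after replacing convexity by concavity and reversing signs: with $\bar u_1$ fixed and $u_2$ varying, the same adjoint \eqref{eq:4.4} and Hamiltonian produce $J(\bar u_1,u_2)\leq J(\bar u_1,\bar u_2)$ under \eqref{eq:4.80}.

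For (iii), combining (i) and (ii) yields the saddle inequalities $J(\bar u_1,u_2)\leq J(\bar u_1,\bar u_2)\leq J(u_1,\bar u_2)$ for all admissible $(u_1,u_2)$, so $(\bar u_1,\bar u_2)$ is an open-loop saddle point; the identification of the $\sup\inf$ with the $\inf\sup$ then follows from the existence of this saddle together with Remark 1.2 of Chapter VI in [6]. I expect the only real obstacle to be the mean-field bookkeeping: keeping every $\mathbb E[\cdot]$-dependent argument correctly aligned through the convexity inequality and matching each of the terms $\mathbb E[\bar m_y]$, $\mathbb E[\bar H_y]$, $\mathbb E[\bar H_{v_1}]$ against the mean-field structure of the terminal condition and drift of \eqref{eq:4.4} via Fubini, rather than any genuine analytic difficulty, since the integrability justifying It\^o's formula and the vanishing of the stochastic integrals is already furnished by the solution spaces stated for \eqref{eq:2.16} and \eqref{eq:4.4}.
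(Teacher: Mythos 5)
Your proposal is correct, and at the game-theoretic level it follows the same strategy as the paper: freeze $\bar u_2(\cdot)$ to reduce (i) to a one-player sufficient maximum principle for the system \eqref{eq:4.7} with cost \eqref{eq:4.8}, argue (ii) symmetrically, and obtain (iii) by combining the two saddle inequalities with the elementary $\inf\sup\geq\sup\inf$ chain. The difference is in how the one-player step is discharged: the paper simply observes that the frozen problem has Hamiltonian $H(t,x,y,u_1,v_1,\bar u_2(t),\mathbb E[\bar u_2(t)],p,q,\tilde q)$ and the same adjoint triple $(\bar p,\bar q,\bar{\tilde q})$, and then cites Theorem~3.1 of Tang and Meng (2016) as a black box, whereas you reprove that cited result inline by the standard convexity--duality argument (convexity of $m$ to pass to $\mathbb E[\langle\bar p(T),\delta x(T)\rangle]$, It\^o's formula on $\langle\bar p,\delta x\rangle$, cancellation of the running cost through the identity $\langle\bar p,\beta-\bar\beta\rangle+\langle\bar q,g-\bar g\rangle+\langle\bar{\tilde q},\tilde g-\bar{\tilde g}\rangle=H-\bar H-(l-\bar l)$, the supporting-hyperplane inequality for $H$, and Fubini to align the mean-field terms $\mathbb E[\bar m_y]$, $\mathbb E[\bar H_y]$, $\mathbb E[\bar H_{v_1}]$ with the adjoint equation \eqref{eq:4.4} and with conditions \eqref{eq:4.8} and \eqref{eq:4.80}). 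Your mean-field bookkeeping is handled correctly, and the vanishing of the stochastic integrals is indeed covered by the stated $S^2_{\mathscr F}\times M^2_{\mathscr F}\times M^2_{\mathscr F}$ regularity of the adjoint and the linear growth of the coefficients. What your route buys is a self-contained proof that does not depend on the external reference; what the paper's route buys is brevity and the reuse of an already-established single-player theorem. The one cosmetic caveat is that condition \eqref{eq:4.8} is stated pointwise in $t$, so you should note explicitly that integrating it over $[0,T]$ yields the nonnegativity of $\mathbb E\int_0^T\langle u_1-\bar u_1,\bar H_{u_1}+\mathbb E[\bar H_{v_1}]\rangle\,dt$; this is immediate but worth a line.
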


  \begin{proof}(i) In the following,  we  consider a stochastic optimal control problem over ${\cal A}_1^S$ where the system  is
\begin{equation}
\displaystyle\left\{
\begin{array}{lll}
dx(t) & = & [(b-{\tilde
g}h)(t,x(t),
u_1(t),
\mathbb E[u_1(t)], \bar u_2(t), \mathbb E[\bar u_2(t)]]dt
\\&&+\displaystyle
g(t,x(t),\mathbb E [x(t)], u_1(t),
\mathbb E[u_1(t)], \bar u_2(t), \mathbb E[\bar u_2(t)])dW(t)
 \\&&+\displaystyle {\tilde g}(t,x(t),\mathbb E[ x(t)],u_1(t),
\mathbb E[u_1(t)], \bar u_2(t), \mathbb E[\bar u_2(t)])dY(t),
\\
\displaystyle x(0) & = & a\in \mathbb{R}^n
\end{array}%
\right.  \label{eq:4.7}
\end{equation}
with the cost functional
\begin{equation}\label{eq:4.8}
\begin{split}
J(u_1(\cdot ), \bar u_2(\cdot))=& \mathbb E\displaystyle\bigg[%
\int_{0}^{T}l(t,X(t),\mathbb E [ x(t)], u_1(t),
\mathbb E[u_1(t)], \bar u_2(t), \mathbb E[\bar u_2(t)])dt \\
& ~~~~~+m(X(T),\mathbb E [ x(T)])\bigg].
\end{split}
\end{equation}
Our  optimal control problem is minimize
$J(u_1(\cdot), \bar{u}_2(\cdot))$ over $u_1(\cdot) \in {\cal A }_1^S$,
i.e., find  $\bar{u}_1(\cdot)\in {\cal A }_1^S$ such
that\begin{equation} J(\bar{u}_1(\cdot),
\bar{u}_2(\cdot))=\displaystyle\inf_{u_1(\cdot)\in {\cal
A}_1}J(u_1(\cdot), \bar{u}_2(\cdot)).
\end{equation}
Then for this case, it is easy to check that the Hamilton is  $H(t,x,y,u_1,v_1,\bar u_2(t),
\mathbb E[\bar u_2(t)],p,q, \tilde {q})$
and  for the admissible control
$\bar u_1(\cdot)\in {\cal A}_1^S,$   the corresponding  sate process and the adjoint process
is still $\bar{x}(\cdot)$ and $(\bar p(\cdot), \bar q(\cdot), \bar{\tilde q}(\cdot))$  respectively.
Thus from the partial observed sufficient maximum principle for optimal control (see Theorem
 3.1 in Tang and Meng (2016)), we conclude that $\bar{u}_1(\cdot)$ is  the optimal
 control of the optimal control problem, i.e.,
$$
J(\bar{u}_1(\cdot),\bar{u}_2(\cdot))\leq
J(u_1(\cdot),\bar{u}_2(\cdot)),~~~for~~all ~~u_1(\cdot)\in{\cal
A}_1^S,
$$
and
$$
J(\bar{u}_1(\cdot),\bar{u}_2(\cdot))=\displaystyle\inf_{u_1(\cdot)\in{\cal
A}_1^S}J(u_1(\cdot),\bar{u}_2(\cdot)).
$$
     The proof of (i) is complete.\\
(ii) This statement can be proved in a similar way as (i).\\
(iii) if both (i) and (ii) hold, then
$$
J(\bar{u}_1(\cdot),u_2(\cdot))\leq
J(\bar{u}_1(\cdot),\bar{u}_2(\cdot))\leq
J(u_1(\cdot),\bar{u}_2(\cdot)),
$$
for any $(u_1(\cdot),u_2(\cdot))\in {\cal A}_1^S\times {\cal A}_2^S.$
Thereby,
$$
\begin{array}{ll}
J(\bar{u}_1(\cdot),\bar{u}_2(\cdot))&\leq
\displaystyle\inf_{u_1(\cdot)\in {\cal
A}_1^S}J(u_1(\cdot),\bar{u}_2(\cdot))\\
&\leq \sup_{u_2(\cdot)\in {\cal
A}_2^S}\left(\displaystyle\inf_{u_1(\cdot)\in {\cal
A}_1^S}J(u_1(\cdot),u_2(\cdot))\right).
\end{array}
$$
On the other hand,
$$
\begin{array}{ll}
J(\bar{u}_1(\cdot),\bar{u}_2(\cdot))&\geq
\displaystyle\sup_{u_2(\cdot)\in {\cal
A}_2^S}J(\bar{u}_1(\cdot),{u}_2(\cdot))\\
&\geq \displaystyle\inf_{u_1(\cdot)\in {\cal
A}_1^S}\left(\displaystyle\sup_{u_2(\cdot)\in {\cal
A}_2^S}J(u_1(\cdot),u_2(\cdot))\right).
\end{array}
$$
Now, due to the inequality
$$
\begin{array}{ll}
&\displaystyle\inf_{u_1(\cdot)\in {\cal A}_1^S}\left(\sup_{u_2(\cdot)\in
{\cal A}_2^S}J(u_1(\cdot),u_2(\cdot))\right)\\
\geq &\sup_{u_2(\cdot)\in {\cal A}_2^S}\left(\inf_{u_1(\cdot)\in {\cal
A}_1^S}J(u_1(\cdot),u_2(\cdot))\right),
\end{array}
$$
we have
$$
\begin{array}{ll}
J(\bar{u}_1(\cdot),\bar{u}_2(\cdot))
&=\displaystyle\sup_{u_2\in {\cal A}_2^S}\left
(\inf_{u_1\in {\cal A}^S_1}J(u_1,u_2)\right )\\
&=\inf_{u_1(\cdot)\in{\cal A}_1^S}\left(\sup_{u_2(\cdot)\in {\cal A}_2^S}J(u_1,u_2)
\right).
\end{array}
$$
The proof  is completed.
\end{proof}

\subsection{ Necessary Conditions of Optimality}

In this subsection we  give
the necessary Pontryagin maximum principle of Problem  \ref{pro:4.1}.
\begin{thm}\label{thm:4.5}
 Let  Assumption \ref{ass:1.1} and \ref{ass:1.2} be satisfied.
   Let $(\bar{u}_1(\cdot), \bar{u}_2(\cdot))$
  be an optimal open-loop control of  Problem \ref{pro:4.1}. Suppose that
    $\bar{x}(\cdot )$ is
 the state process of the system (\ref{eq:3.7}) corresponding to
 the admissible control $(\bar{u}_1(\cdot), \bar{u}_2(\cdot)).$
 Let $(\bar p(\cdot),\bar q(\cdot),\bar {\tilde q}(\cdot),\bar r(\cdot),\bar R(\cdot),\bar {\tilde R}(\cdot) )$ be  the unique
 solution of the  adjoint equation (\ref{eq:4.4}) corresponding to $
 (\bar{u}_1(\cdot), \bar{u}_2(\cdot);  \bar{x}(\cdot ))$.
 Then
the optimality conditions
\begin{eqnarray}\label{eq:4.10}
\begin{split}
  &\Big\langle\mathbb E \big[ {\bar H}_{u_1}(t)|\mathscr F_t^Y]
+\mathbb E[{ \bar H}_{v_1}(t)], u_1-\bar u_1(t) \Big\rangle\geq 0
\end{split}
\end{eqnarray}
and

\begin{eqnarray}\label{eq:4.11}
\begin{split}
  &\Big\langle\mathbb E \big[ {\bar H}_{u_2}(t)|\mathscr F_t^Y]
+\mathbb E[{ \bar H}_{v_1}(t)], u_2-\bar u_2(t) \Big\rangle\leq 0
\end{split}
\end{eqnarray}
holds for any $(u_1, u_2)\in U_1\times U_2$  and a.e. $(t,\omega )\in \lbrack 0,T]\times \Omega .$ Here
using the notation \eqref{eq:3.111},  for $i=1,2,$ we set
\begin{equation} \label{eq:3.13}
  \bar H_{u_i}(t)={ H}_{u_i}(t, \bar x(t), \mathbb E[\bar x(t)],
  \bar u_1(t),\mathbb E[\bar u_1(t)], \bar u_2(t), \mathbb E[\bar u_2(t)])
\end{equation}
and
\begin{equation}\label{eq:3.14}
  \bar H_{v_i}(t)={ H}_{v_i}(t, \bar x(t), \mathbb E[\bar x(t)],
  \bar u_1(t),\mathbb E[\bar u_1(t)], \bar u_2(t), \mathbb E[\bar u_2(t)] ).
\end{equation}
\end{thm}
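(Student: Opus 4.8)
The plan is to reduce the game-theoretic necessary condition to the single-player necessary maximum principle that is already available in the strong-formulation optimal control theory of Tang and Meng (2016), exactly mirroring the decoupling argument used in the weak-formulation Theorem of Section 3. The starting observation is that if $(\bar u_1(\cdot),\bar u_2(\cdot))$ is an optimal open-loop control of Problem \ref{pro:4.1}, then by definition of the saddle (the chain of inequalities $J(\bar u_1,u_2)\le J(\bar u_1,\bar u_2)\le J(u_1,\bar u_2)$) we simultaneously have
\begin{equation*}
J(\bar u_1(\cdot),\bar u_2(\cdot))=\inf_{u_1(\cdot)\in{\cal A}_1^S}J(u_1(\cdot),\bar u_2(\cdot))
\quad\text{and}\quad
J(\bar u_1(\cdot),\bar u_2(\cdot))=\sup_{u_2(\cdot)\in{\cal A}_2^S}J(\bar u_1(\cdot),u_2(\cdot)).
\end{equation*}
Each of these is an ordinary (single-player) partially observed optimal control problem: freezing $u_2(\cdot)=\bar u_2(\cdot)$ turns the game into the minimization problem with state equation \eqref{eq:4.7} and cost \eqref{eq:4.8}, for which $\bar u_1(\cdot)$ is optimal; symmetrically, freezing $u_1(\cdot)=\bar u_1(\cdot)$ gives a maximization problem for which $\bar u_2(\cdot)$ is optimal.

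First I would verify that the frozen problems carry exactly the Hamiltonian and adjoint data assumed in the theorem. When $u_2$ is frozen at $\bar u_2(\cdot)$, the relevant Hamiltonian is $H(t,x,y,u_1,v_1,\bar u_2(t),\mathbb E[\bar u_2(t)],p,q,\tilde q)$ obtained from \eqref{eq:4.3}, and since the frozen state dynamics \eqref{eq:4.7} coincide with \eqref{eq:2.16} along $(\bar u_1,\bar u_2)$, the optimal state process is still $\bar x(\cdot)$ and the adjoint triple solving \eqref{eq:4.4} is still $(\bar p(\cdot),\bar q(\cdot),\bar{\tilde q}(\cdot))$. This is the crucial consistency point: the adjoint equation \eqref{eq:4.4} built for the game is literally the adjoint equation of each frozen single-player problem, because the derivatives $\bar H_x(t)$ and $\bar H_y(t)$ appearing in \eqref{eq:4.4} are insensitive to whether we regard the control argument being optimized as $u_1$ or $u_2$.

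Then I would apply the necessary partially observed maximum principle of Tang and Meng (2016) to each frozen problem. For the minimization in $u_1$ the necessary condition reads
\begin{equation*}
\Big\langle \mathbb E\big[\bar H_{u_1}(t)\,|\,\mathscr F_t^Y\big]+\mathbb E\big[\bar H_{v_1}(t)\big],\, u_1-\bar u_1(t)\Big\rangle\ge 0,
\end{equation*}
which is \eqref{eq:4.10}; for the maximization in $u_2$ the inequality reverses, giving \eqref{eq:4.11}. The conditional expectation $\mathbb E[\,\cdot\,|\mathscr F_t^Y]$ enters precisely because the admissible controls are $\{\mathscr F_t^Y\}$-adapted (the partial-observation constraint), so the variational inequality is projected onto the observation filtration; the term $\mathbb E[\bar H_{v_i}(t)]$ is the contribution of the mean-field arguments $\mathbb E[u_i(t)]$, handled by the standard duality computation for mean-field terms already embedded in the cited theorem. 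I note in passing that here, unlike the weak formulation of Section 3, no density factor $Z^{(\bar u_1,\bar u_2)}(t)$ appears, because the cost \eqref{eq:1.10} is taken under the fixed reference measure $\mathbb P$.

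The only genuine content beyond bookkeeping is justifying the decoupling, and that is immediate from the saddle inequalities; the main technical obstacle is therefore not in this proof but is inherited from the single-player theorem, namely the verification that the two frozen control problems genuinely satisfy the hypotheses of Tang and Meng (2016) (admissibility class, integrability \eqref{eq:2.14}, and the regularity in Assumptions \ref{ass:1.1}–\ref{ass:1.2}). Since freezing a control preserves measurability, adaptedness and the growth/differentiability bounds, each frozen problem is well posed and the cited result applies verbatim, completing the proof.
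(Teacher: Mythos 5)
Your proposal is correct and follows essentially the same route as the paper: it uses the saddle-point inequalities to decouple the game into two frozen single-player partially observed control problems (minimization in $u_1$ with $u_2=\bar u_2$ fixed, maximization in $u_2$ with $u_1=\bar u_1$ fixed), notes that the state process, Hamiltonian and adjoint triple are unchanged under the freezing, and invokes the necessary maximum principle of Tang and Meng (2016) (their Theorem 3.5) for each, with the inequality reversed for the maximizing player. Your additional remarks on why no density factor $Z^{(\bar u_1,\bar u_2)}$ appears and on verifying the hypotheses of the cited theorem are consistent with, and slightly more explicit than, the paper's argument.
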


 {\bf Proof:} Since  $(\bar{u}_1(\cdot),\bar{u}_2(\cdot))$ is an
optimal open-loop control, then  $(\bar{u}_1(\cdot),\bar{u}_2(\cdot))$
is an open-loop
saddle point, i.e.,
\begin{eqnarray}
J(\bar{u}_1(\cdot), {u}_2(\cdot))\leq J(\bar{u}_1(\cdot),
\bar{u}_2(\cdot))\leq J(u_1(\cdot), \bar{u}_2(\cdot)).
\end{eqnarray}
So we have \begin{equation}\label{eq:3.3}
    J_1(\bar{u}_1(\cdot),
\bar{u}_2(\cdot))=\displaystyle\min_{u_1(\cdot)\in {\cal
A}_1^S}J_1(u_1(\cdot), \bar{u}_2(\cdot)) \end{equation}
and
\begin{equation}\label{eq:4.161}
 J_2(\bar{u}_1(\cdot),
\bar{u}_2(\cdot))=\displaystyle\max_{u_2(\cdot)\in {\cal
A}_2^S}J_2(\bar u_1(\cdot), {u}_2(\cdot)).
\end{equation}
By (\ref{eq:3.3}),  $\bar u_1(\cdot)$  can be regarded as
an optimal control  of the
optimal control problem where
the controlled system is
(\ref{eq:4.7}) and the cost functional
is (\ref{eq:4.8}).
Then for this case, it is easy to check that the Hamilton is  $H(t,x,y,u_1,v_1,\bar u_2(t),
\mathbb E[\bar u_2(t)],p,q, \tilde {q})$
and  for the admissible control
$\bar u_1(\cdot)\in {\cal A}_1^S,$   the corresponding  sate process and the adjoint process
is still $\bar{x}(\cdot)$ and $(\bar p(\cdot), \bar q(\cdot), \bar{\tilde q}(\cdot))$  respectively.
Thus applying the     partial
necessary stochastic maximum principle for  optimal control problems
(see Theorem 3.5 in Tang and Meng (2016)), we can obtain \eqref{eq:4.10}.
Similarly, from (\ref{eq:4.161}), we can obtain (\ref{eq:4.11}). The proof is completed.

\section {An Example:  A Linear Quadratic Differential Game Problem}

In this section, we apply
our  stochastic maximum  principle to
solve  a  partial observed  stochastic  linear quadratic differential game  problem. Let us make it more precise below.
In this case, we assume the  state system is the
following linear mean-field  SDE
\begin{equation}\label{eq:5.1}
\left\{\begin {array}{ll}
  dX(t)=&(A_1(t)X(t)+A_2(t)\mathbb E [X(t)]
  +B_{11}(t)u_1(t)+B_{12}(t)\mathbb E [u_1(t)]
  +B_{21}(t)u_2(t)+B_{22}(t)\mathbb E [u_2(t)])dt
  \\&+(C_1(t)X(t)
  + C_2(t)\mathbb E [X(t)]
  +D_{11}(t)u_1(t)+D_{12}(t)\mathbb E [u_1(t)]
  +D_{21}(t)u_2(t)+D_{22}(t)\mathbb E [u_2(t)])dW(t)
  \\&+(F_1(t)X(t)
  +F_2(t)\mathbb E [X(t)]
  +G_{11}(t)u_1(t)+G_{12}(t)\mathbb E [u_1(t)]
  +G_{21}(t)u_2(t)+G_{22}(t)\mathbb E [u_2(t)])dW^{(u_1,u_2)}(t),
   \\x(0)=&x \in \mathbb R^n,
\end {array}
\right.
\end{equation}
with an observation

\begin{equation}\label{eq:5.2}
\displaystyle\left\{
\begin{array}{lll}
dY(t) & = & h(t)dt+dW^{(u_1,u_2)}(t),  \\_{}
\displaystyle Y(0) & = & 0,%
\end{array}
\right.
\end{equation}
and the cost functional  has the following
quadratic form:

 \begin{eqnarray}\label{eq:5.3}
\begin{split}
J ( u_1 (\cdot), u_2(\cdot) ) = &{\mathbb E} [
 \langle M, X(T) \rangle ]+{\mathbb E} \bigg [ \int_0^T \langle Q (s), X
(s) \rangle d s \bigg ]
\\&+ {\mathbb E} \bigg [ \int_0^T
 \langle N_{11} (s) u_1 (s), u_1 (s) \rangle d s\bigg]+ {\mathbb E} \bigg [ \int_0^T \langle N_{12} (s)
\mathbb E[u_1 (s)], \mathbb E[u_1 (s)] \rangle d s
\bigg ]
\\&+ {\mathbb E} \bigg [ \int_0^T
 \langle N_{21} (s) u_2 (s), u_2 (s) \rangle d s\bigg]+ {\mathbb E} \bigg [ \int_0^T \langle N_{22} (s)
\mathbb E[u_{2} (s)], \mathbb E[u_2 (s)] \rangle d s
\bigg ].
\end{split}
\end{eqnarray}
 In this case, our  control process  $(u_1(\cdot), u_2(\cdot))$  is said to be  an admissible  stochastic
process if  $(u_1(\cdot), u_2(\cdot))\in
M_{{\mathscr F}^{Y}}^2(0,T; \mathbb R^{k_1})\times M_{{\mathscr F}^{Y}}^2(0,T; \mathbb R^{k_2}).$  The set of all admissible controls
is also denoted by ${\cal A}_1^S\times {\cal A}_2^S.$ Note that
there is no constraint on our control process,
since it takes value in  $\mathbb R^{k_1}\times \mathbb R^{K_2}.$  Now
we make the basic assumptions on
the coefficients.

\begin{ass}\label{ass:5.1}
 The matrix-valued functions $A_1, A_2, C_1,  C_2, F_1,  F_2,:[0, T]\rightarrow \mathbb R^{n\times n};
B_{11}, B_{12},D_{11}, D_{12},  G_{11},  G_{12},:[0, T]\rightarrow \mathbb R^{n\times k_1}
;
B_{21}, B_{22},D_{21}, D_{22},  G_{21},  G_{22},:[0, T]\rightarrow \mathbb R^{n\times k_2}
; N_{11}, N_{12}:[0, T]\rightarrow \mathbb R^{k_1\times k_1}; N_{21}, N_{22}:[0, T]\rightarrow \mathbb R^{k_2\times k_2}; Q:[0,T] \rightarrow \mathbb R^n;     h:[0, T]\rightarrow \mathbb R$ are uniformly bounded measurable functions.
$M$ is a vector in $\mathbb R^{n}.$
\end{ass}

\begin{ass}\label{ass:5.2}
 The matrix-valued functions $ N_{11} N_{11}+N_{12}$ are uniformly positive, i.e. for $\forall u_1\in \mathbb R^{k_1}$ and a.s. $t\in [0, T]$,
$ \langle N_{11}(t)u_1, u_1 \rangle \geq \delta \langle u_1, u_1\rangle
$ and $ \langle (N_{11}(t)+ N_{12}(t))u_1, u_1 \rangle \geq \delta \langle  u_1, u_1\rangle,
$  for some positive constant
$\delta$.    The matrix-valued functions $ N_{21} N_{21}+N_{22}$ are uniformly negative, i.e. for $\forall u_2 \in \mathbb R^{k_2}$ and a.s. $t\in [0, T]$,
$ \langle N_{21}(t)u_2, u_2 \rangle \leq -\delta \langle u_2, u_2\rangle
$ and $ \langle (N_{21}(t)+ N_{22}(t))u_2, u_2 \rangle \leq -\delta \langle  u_2, u_2\rangle,
$  for some positive constant
$\delta$.
\end{ass}
Our partial observed  stochastic  linear quadratic differential game control problem can be stated as follows.

 \begin{pro}\label{pro:5.1}

Find an admissible open-loop control
$(\bar{u}_1(\cdot), \bar{u}_2(\cdot))$ over  ${\cal A}_1^S\times{\cal A}_2^S$
such that
\begin{equation}\label{c1}
J(\bar{u}_1(\cdot), \bar{u}_2(\cdot))=\sup_{u_2(\cdot)\in {\cal
A}_2^S}\left(\inf_{u_1(\cdot)\in {\cal
A}_1^S}J(u_1(\cdot),u_2(\cdot))\right)
=\inf_{u_1(\cdot)\in {\cal
A}_1^S}\left(\inf_{u_2(\cdot)\in {\cal
A}_2^S}J(u_1(\cdot),u_2(\cdot))\right),
\end{equation}
   subject to \eqref{eq:5.1},
   \eqref{eq:5.2} and \eqref{eq:5.3}.
 \end{pro}

It is easy to check that  under
Assumptions \ref{ass:5.1} and \ref{ass:5.2},
if we set

\begin{eqnarray}\label{eq:5.5}
  \begin{split}
    &b(t,x,y,u_1,v_1, u_2,v_2)=A_1(t)x+A_2(t)y
  +B_{11}(t)u_1+B_{12}(t)v_1+B_{21}(t)u_2+B_{22}(t)v_2,
   \\& g(t,x,y,u_1,v_1,u_2,v_2)=C_1(t)x+C_2(t)y
  +D_{11}(t)u_1+D_{12}(t)v_1+D_{21}(t)u_2+D_{22}(t)v_2,
  \\&\tilde g(t,x,y,u,v)=
  F_1(t)x+F_2(t)y
  +G_{11}(t)u_1+G_{12}(t)v_1
  +G_{21}(t)u_2+G_{22}(t)v_2,
  \\&
  h(t,x,y,u_1,v_1, u_2,v_2)=h(t),
  m(x,y)=\langle M, x \rangle,
  \\&l(t,x,y,u_1,v_1, u_2, v_2)=\langle Q_1(t), x\rangle +\langle N_{11}(t)u_1, u_1\rangle +\langle N_{12}(t)v_1, v_1\rangle+\langle N_{21}(t)u_2, u_2\rangle +\langle N_{22}(t)v_2, v_2\rangle.
  \end{split}
\end{eqnarray}
Problem \ref{pro:5.1} can be regarded as  a special
case of Problem \ref{pro:4.1}
 and  Assumptions \ref{ass:1.1} and \ref{ass:1.2}  for \eqref{eq:5.5} hold. Thus  Theorem \ref{thm:4.3}
and \ref{thm:4.5} can be applied to solve  Problem \ref{pro:5.1}.  In this case,  the Hamiltonian becomes
\begin{eqnarray} \label{eq:5.6}
\begin{split}
 & H(t,x,y,u,v,p,q,\tilde q)
  \\= &
  \langle  p, A_1(t)x+A_2(t)y
  +B_{11}(t)u_1+B_{12}(t)v_1+B_{21}(t)u_2+B_{22}(t)v_2
  \\&\quad\quad- h(t)(F_1(t)x+F_2(t)y
  +G_{11}(t)u_1+G_{12}(t)v_1+G_{21}(t)u_2+G_{22}(t)v_2 ) \rangle
  \\&
    +\langle  q, C_1(t)x+C_2(t)y
  +D_{11}(t)u_1+D_{12}(t)v_1+D_{21}(t)u_2+D_{22}(t)v_2 \rangle
  \\&+\langle  \tilde q, F_1(t)x+F_2(t)y
  +G_{11}(t)u_1+G_{12}(t)v_1+G_{21}(t)u_2+G_{22}(t)v_2 \rangle
  \\& +\langle Q_1, x\rangle +
  \langle N_{11}u_1, u_1\rangle +\langle N_{12}v_1, v_1\rangle+
  \langle N_{21}u_2, u_2\rangle +\langle N_{22}v_2, v_2\rangle.
  \end{split}
\end{eqnarray}
For any admissible pair $(u_1(\cdot), u_2(\cdot); x(\cdot)),$
the corresponding adjoint equation becomes

 \begin {equation}\label{eq:5.7}
\left\{\begin{array}{lll}
dp(t)&=&-\bigg[(A^\top_1(t)-h(t)F_1^\top(t))p(t)+ (A_2^\top(t)-h(t)F_2^\top(t))\mathbb E [p(t)]+C^{\top}_1(t)q(t)
\\&&+ C^{\top}_2(t)\mathbb E[q(t)]
+ F^{\top}_1(t)\tilde q(t)+F^{\top}_2(t)\mathbb E[\tilde q(t)]+Q(t)\bigg]dt
\\&&+q(t)dW(t)+\tilde q(t)dY(t),
 \\p(T)&=&M.
\end{array}
  \right.
  \end {equation}
  In the following, we will prove  Problem \ref{pro:5.1} admits at least
  an open-loop saddle point $(\bar u_(
  \cdot),\bar  u_2(\cdot))\in {\cal A}_1^S\times
  {\cal A}_2^S.$  To the end, we need the  following basic result on  a saddle point in convex analysis  theory.

  \begin{lem} \label{lem:5.2} (Proposition 2.4 of Chapter VI in Ekeland and T\'{e}mam (1976)). Let $\cal A$ and $\cal B$ be two
    reflex  Banach space. Assume that a function
    $L $ of ${\cal A}\times {\cal B}\longrightarrow \mathbb R$ satisfies
    $$ \forall u\in {\cal A}, p\longrightarrow L(u,p)$$ is concave upper-semi continuous
    and
    $$ \forall p\in {\cal A}, u\longrightarrow L(u,p)$$ is convex lower-semi continuous.
    Moreover, there exist $u_0\in \cal A$ and
    $p_0\in \cal B$ such that
    $$ \lim_{||u||\longrightarrow\infty }L (u,p_0)=+\infty$$
    and
    $$ \lim_{||p||\longrightarrow\infty }L (u_0,p)=-\infty.$$
    Then $L$ possesses at least one  saddle point on $L.$

  \end{lem}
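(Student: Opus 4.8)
The plan is to exhibit the saddle point $(\bar u,\bar p)$ by first solving the problem on large balls, where weak compactness is available, and then using the coercivity hypotheses to push the two constraints out to infinity. Throughout I would exploit that $\cal A$ and $\cal B$, being reflexive, have weakly compact closed balls, and that a convex lower-semicontinuous function is automatically weakly lower-semicontinuous while a concave upper-semicontinuous function is weakly upper-semicontinuous. These are the two ingredients that let purely topological hypotheses feed into a minimax argument.

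First I would fix radii $r,s>0$ large enough that $u_0\in K_r:=\{u\in{\cal A}:\|u\|\le r\}$ and $p_0\in C_s:=\{p\in{\cal B}:\|p\|\le s\}$. Both sets are convex and weakly compact. On $K_r\times C_s$ the map $u\mapsto L(u,p)$ is convex and weakly lower-semicontinuous while $p\mapsto L(u,p)$ is concave and weakly upper-semicontinuous, so Sion's minimax theorem (in its version for weakly compact convex sets) applies and produces a saddle point $(u_{r,s},p_{r,s})\in K_r\times C_s$, namely
$$L(u_{r,s},p)\le L(u_{r,s},p_{r,s})\le L(u,p_{r,s})\quad\text{for all }(u,p)\in K_r\times C_s.$$

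Next I would extract bounds on the truncated saddle points that are uniform in $r,s$. Since $L(\cdot,p_0)$ is convex, lower-semicontinuous and coercive it attains a finite minimum $m_0$, and since $L(u_0,\cdot)$ is concave, upper-semicontinuous and anticoercive it attains a finite maximum $M_0$. Evaluating the saddle inequalities at $p=p_0$ and $u=u_0$ gives
$$m_0\le L(u_{r,s},p_0)\le L(u_{r,s},p_{r,s})\le L(u_0,p_{r,s})\le M_0.$$
In particular $L(u_{r,s},p_0)\le M_0$, so coercivity of $L(\cdot,p_0)$ confines $u_{r,s}$ to a fixed ball of radius $\rho_1$; and $L(u_0,p_{r,s})\ge m_0$, so anticoercivity of $L(u_0,\cdot)$ confines $p_{r,s}$ to a fixed ball of radius $\rho_2$, both radii independent of $r,s$.

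Finally I would fix $r>\max(\rho_1,\|u_0\|)$ and $s>\max(\rho_2,\|p_0\|)$, so that $u_{r,s}$ is interior to $K_r$ and $p_{r,s}$ is interior to $C_s$. A standard line-segment argument then removes the constraints: for arbitrary $u\in{\cal A}$ the point $(1-t)u_{r,s}+tu$ stays in $K_r$ for small $t>0$, and convexity of $L(\cdot,p_{r,s})$ upgrades the restricted inequality to $L(u_{r,s},p_{r,s})\le L(u,p_{r,s})$ for every $u$; the concave analogue does the same in the $p$ variable. Hence $(\bar u,\bar p):=(u_{r,s},p_{r,s})$ is a global saddle point. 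I expect the \emph{main obstacle} to be the minimax equality in the first step: it is precisely there that reflexivity (for weak compactness of balls) and the passage from strong to weak semicontinuity via convexity/concavity are indispensable, whereas the coercivity-based localization in the later steps is comparatively routine bookkeeping.
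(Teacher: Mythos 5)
This lemma is quoted in the paper directly from Ekeland and T\'emam (1976, Chapter VI, Proposition 2.4) and is not proved there, so there is no in-paper argument to compare against; your reconstruction is correct and follows the same route as the cited source: truncate to weakly compact balls (reflexivity plus the fact that convex l.s.c.\ and concave u.s.c.\ functions are weakly l.s.c.\ and weakly u.s.c.), obtain a saddle point on the truncated problem via a compact convex minimax theorem, use the two coercivity hypotheses together with the chain $m_0\le L(u_{r,s},p_0)\le L(u_{r,s},p_{r,s})\le L(u_0,p_{r,s})\le M_0$ to bound the truncated saddle points uniformly, and then remove the constraints by the interior line-segment argument. The only point worth making explicit is that on the balls you need the saddle-point form of the minimax theorem, not merely the minimax equality; this does follow, since $u\mapsto\sup_p L(u,p)$ is weakly l.s.c.\ and $p\mapsto\inf_u L(u,p)$ is weakly u.s.c., so both extrema are attained on the weakly compact balls and the equality of values then yields a saddle point.
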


\begin{thm}
  Let Assumptions \ref{ass:5.1} and
  \ref{ass:5.2} be satisfied. Then
  Problem \ref{pro:5.1} has  at least
  an saddle point $(\bar u_(
  \cdot),\bar  u_2(\cdot))\in {\cal A}_1^S\times
  {\cal A}_2^S$  and
   \begin{eqnarray}
   \displaystyle\sup_{u_2(\cdot)\in{\cal
A}_2} \left(\inf_{u_1(\cdot)\in{\cal
A}_1}J(u_1(\cdot),u_2(\cdot))\right)=\inf_{u_1(\cdot)\in{\cal A}_1}(\sup_{u_2(\cdot)\in{\cal
A}_2}J(u_1(\cdot),u_2(\cdot))).
   \end{eqnarray}

\end{thm}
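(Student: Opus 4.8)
The plan is to recognize the cost functional $J$ as a convex--concave saddle functional on a product of Hilbert spaces and then invoke Lemma \ref{lem:5.2}. First I would take ${\cal A}_1^S=M_{{\mathscr F}^{Y}}^2(0,T;\mathbb R^{k_1})$ and ${\cal A}_2^S=M_{{\mathscr F}^{Y}}^2(0,T;\mathbb R^{k_2})$, which are Hilbert spaces and hence reflexive Banach spaces, and set $L(u_1,u_2)=J(u_1,u_2)$. The crucial structural observation is that, once \eqref{eq:5.1}--\eqref{eq:5.2} are rewritten in the strong form \eqref{eq:2.16}, the state equation is \emph{linear} in $(X,u_1,u_2)$ and driven by the fixed Brownian motions $W,Y$; hence by the estimates of Lemma 1.4 in Tang and Meng (2016) the solution map $(u_1,u_2)\mapsto X$ is a bounded affine map from ${\cal A}_1^S\times{\cal A}_2^S$ into $S_{\mathscr F}^2(0,T;\mathbb R^n)$. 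Since $m(x,y)=\langle M,x\rangle$ and the running state cost $\langle Q,X\rangle$ are \emph{linear} in $X$ (there is no quadratic state penalty), the terms $\mathbb E[\langle M,X(T)\rangle]+\mathbb E\int_0^T\langle Q(s),X(s)\rangle\,ds$ are affine in $(u_1,u_2)$, while the remaining terms are quadratic in $u_1$ and in $u_2$ separately.

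Next I would verify the hypotheses of Lemma \ref{lem:5.2}. For fixed $u_2$, the map $u_1\mapsto J(u_1,u_2)$ equals an affine functional plus $\mathbb E\int_0^T(\langle N_{11}u_1,u_1\rangle+\langle N_{12}\mathbb E[u_1],\mathbb E[u_1]\rangle)\,dt$, which is convex by the positivity in Assumption \ref{ass:5.2}; symmetrically, for fixed $u_1$ the map $u_2\mapsto J(u_1,u_2)$ is concave by the negativity in Assumption \ref{ass:5.2}. Because the affine part is strongly continuous (the state map is bounded) and the explicit quadratic parts are continuous quadratic forms on $L^2$, $J$ is strongly continuous; a convex strongly continuous functional on a reflexive space is weakly lower-semicontinuous and, dually, a concave one is weakly upper-semicontinuous, which gives exactly the semicontinuity demanded by the lemma.

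The coercivity conditions are where Assumption \ref{ass:5.2} is used decisively. Fixing $p_0=\bar u_2$ and using the decomposition $u_1=(u_1-\mathbb E[u_1])+\mathbb E[u_1]$, whose summands are orthogonal in $L^2$ so that the cross terms vanish, the mean-field quadratic form in $u_1$ equals
\begin{equation*}
\mathbb E\!\int_0^T\!\langle N_{11}(u_1-\mathbb E[u_1]),u_1-\mathbb E[u_1]\rangle\,dt+\int_0^T\!\langle (N_{11}+N_{12})\mathbb E[u_1],\mathbb E[u_1]\rangle\,dt\ \ge\ \delta\,\|u_1\|^2 ,
\end{equation*}
so the two positivity estimates of Assumption \ref{ass:5.2} combine to give $\delta$-coercivity; since the affine remainder grows at most linearly, $J(u_1,p_0)\to+\infty$ as $\|u_1\|\to\infty$. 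The mirror computation with the two negativity estimates yields $J(u_0,u_2)\to-\infty$ as $\|u_2\|\to\infty$ for any fixed $u_0=\bar u_1$. All hypotheses of Lemma \ref{lem:5.2} being met, $J$ admits a saddle point $(\bar u_1,\bar u_2)\in{\cal A}_1^S\times{\cal A}_2^S$, and the saddle inequality $J(\bar u_1,u_2)\le J(\bar u_1,\bar u_2)\le J(u_1,\bar u_2)$ then forces the equality of the upper and lower values exactly as recalled in \eqref{eq:2.9}. I expect the only delicate point to be the clean extraction of the $\delta$-coercivity from the two \emph{separate} definiteness conditions on $N_{11}$ and $N_{11}+N_{12}$ through the $u_1-\mathbb E[u_1]$ decomposition (and its mirror for $u_2$); the rest is the standard convex-analytic verification of the Ekeland--T\'emam hypotheses.
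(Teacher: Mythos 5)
Your proposal is correct and follows essentially the same route as the paper: identify ${\cal A}_1^S\times{\cal A}_2^S$ as reflexive Hilbert spaces, obtain continuity of $J$ from the a priori estimates of Lemma 1.4 in Tang and Meng (2016), extract convexity in $u_1$ and concavity in $u_2$ together with $\delta$-coercivity via the decomposition $u_i=(u_i-\mathbb E[u_i])+\mathbb E[u_i]$ under Assumption \ref{ass:5.2}, and then apply the Ekeland--T\'emam saddle-point result (Lemma \ref{lem:5.2}). Your additional remarks on the affine boundedness of the control-to-state map and the $L^2$-orthogonality of the centered and mean parts simply make explicit steps that the paper leaves implicit.
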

\begin{proof}

Since   ${\cal A}_1^S=
M_{{\mathscr F}_{Y}}^2(0,T; \mathbb R^{k_1})$ and
${\cal A}_2^S=
M_{{\mathscr F}_{Y}}^2(0,T; \mathbb R^{k_2})$
are s Hilbert spaces thus  reflexive Banach  spaces. Indeed, by the a priori
estimate  for the state process in
  Lemma 1.4 in Tang and Meng (2016), over
$ {\cal A}_1^S\times {\cal A}_2^S, $
 we can show that the cost functional
$J ( u_1 (\cdot), u_2(\cdot) )$ is continuous
 with respect to $(u_1 (\cdot), u_2(\cdot))$
 and hence lower-semi continuous with respect to
  $u_1(\cdot)$ and upper-semi continuous with respect to
  $u_2(\cdot),$  respectively. Since the weighting matrices in the cost
functional are not random,  from the
definition of  $J ( u_1 (\cdot), u_2(\cdot) )$ (see \eqref{eq:5.3}) and by a simple calculation, we can get that
  \begin{eqnarray}\label{eq:2.4}
\begin{split}
 J( u_1(\cdot), u_2(\cdot))=&
 \mathbb E\bigg[\int_0^T\big(\langle N_1(t)(u(t)-\mathbb E[u(t)]), u(t)-\mathbb E [u(t)]\rangle
 +\langle  (N_1(t)+
{N}_2(t))\mathbb E[u(t)], \mathbb E[u(t)]\rangle \big )dt\bigg]
\\&+\mathbb E\bigg[\langle
M, X(T)\rangle \bigg]+\mathbb E\bigg[\int_0^T\bigg(\langle Q(t),
X(t)\rangle \bigg)dt\bigg]
\\&+ {\mathbb E} \bigg [ \int_0^T
 \langle N_{21} (s) u_2 (s), u_2 (s) \rangle d s\bigg]+ {\mathbb E} \bigg [ \int_0^T \langle N_{22} (s)
\mathbb E[u_{2} (s)], \mathbb E[u_2 (s)] \rangle d s
\bigg ].
\end{split}
\end{eqnarray}
Thus  the cost functional $J(u_1(\cdot), u_2(\cdot))$ is convex  with respect to $u_1(\cdot)$ over
 ${\cal A}_1^S$ from the nonnegativity of the
 $N_{11}, N_{11}+N_{12}$.  Furthermore, it follows from the
uniformly strictly positivity  of $N_{11}, N_{11}+N_{12}$  and  the a priori
estimate  for the state process, that
\begin{eqnarray}\label{eq:2.5}
  \begin{split}
    J(u_1(\cdot), u_2(\cdot)) \geq &\delta \mathbb E\bigg[\int_0^T\langle u_1(t)-\mathbb E[u_1(t)], u_1(t)-\mathbb E[u_1(t)]\rangle dt\bigg]+
 \delta\mathbb E\bigg[\int_0^T \langle \mathbb E[u_1(t)], \mathbb E[u_1(t)]\rangle dt\bigg]-K||u_1(\cdot)||_{{\cal A}_1^S}
 \\&+ {\mathbb E} \bigg [ \int_0^T
 \langle N_{21} (s) u_2 (s), u_2 (s) \rangle d s\bigg]+ {\mathbb E} \bigg [ \int_0^T \langle N_{22} (s)
\mathbb E[u_{2} (s)], \mathbb E[u_2 (s)] \rangle d s
\bigg ].
 \\=& \delta||u_1(\cdot)||_{{\cal A}_1^S}^2
 -K||u_1(\cdot)||_{{\cal A}_1^S}
 \\&+ {\mathbb E} \bigg [ \int_0^T
 \langle N_{21} (s) u_2 (s), u_2 (s) \rangle d s\bigg]+ {\mathbb E} \bigg [ \int_0^T \langle N_{22} (s)
\mathbb E[u_{2} (s)], \mathbb E[u_2 (s)] \rangle d s
\bigg ],
  \end{split}
\end{eqnarray}
which implies
\begin{eqnarray*}
\lim_{ \| u_1 (\cdot) \|_{{\cal A}_1^S} {\rightarrow +\infty} } J ( u_1 (\cdot), u_2(\cdot) ) =+ \infty .
\end{eqnarray*}
On the other hand, we have
 \begin{eqnarray}\label{eq:2.4}
\begin{split}
 J( u_1(\cdot), u_2(\cdot))=&
 \mathbb E\bigg[\int_0^T\big(\langle N_{21}(t)(u_2(t)-\mathbb E[u_2(t)]),
  u_2(t)-\mathbb E [u_2(t)]\rangle
 +\langle  (N_{21}(t)+
{N}_{22}(t))\mathbb E[u_2(t)], \mathbb E[u_2(t)]\rangle \big )dt\bigg]
\\&+\mathbb E\bigg[\langle
M, X(T)\rangle \bigg]+\mathbb E\bigg[\int_0^T\langle Q(t),
X(t)\rangle dt\bigg]
\\&+ {\mathbb E} \bigg [ \int_0^T
 \langle N_{11} (s) u_1 (s), u_1 (s) \rangle d s\bigg]+ {\mathbb E} \bigg [ \int_0^T \langle N_{12} (s)
\mathbb E[u_{1} (s)], \mathbb E[u_1 (s)] \rangle d s
\bigg ].
\end{split}
\end{eqnarray}

Thus  the cost functional $J(u_1(\cdot), u_2(
\cdot))$ over
 ${\cal A}_2^S$ is concave with respect to
 $u_2(\cdot)$ from the negativity of the
 $N_{21}, N_{21}+N_{22},$.  Furthermore, it follows from the uniformly strictly negativity  of $N_{21}, N_{21}+N_{22},$, that
\begin{eqnarray}\label{eq:2.5}
  \begin{split}
    J(u_1(\cdot), u_2(\cdot)) \leq  &-\delta \mathbb E\bigg[\int_0^T\langle u_2(t)-\mathbb E[u_2(t)], u_2(t)-\mathbb E[u_2(t)]\rangle dt\bigg]+
 -\delta\mathbb E\bigg[\int_0^T \langle \mathbb E[u_2(t)], \mathbb E[u_2(t)]\rangle dt\bigg]+K||u_2(\cdot)||_{{\cal A}_1^S}
 \\&+ {\mathbb E} \bigg [ \int_0^T
 \langle N_{11} (s) u_1 (s), u_1 (s) \rangle d s\bigg]+ {\mathbb E} \bigg [ \int_0^T \langle N_{12} (s)
\mathbb E[u_{1} (s)], \mathbb E[u_1 (s)] \rangle d s
\bigg ]
 \\=& -\delta||u_2(\cdot)||_{{\cal A}_2^S}^2
 +K||u_2(\cdot)||_{{\cal A}_2^S}
 \\&+ {\mathbb E} \bigg [ \int_0^T
 \langle N_{11} (s) u_1(s), u_1 (s) \rangle d s\bigg]+ {\mathbb E} \bigg [ \int_0^T \langle N_{12} (s)
\mathbb E[u_{1} (s)], \mathbb E[u_1 (s)] \rangle d s
\bigg ],
  \end{split}
\end{eqnarray}
which implies
\begin{eqnarray*}
\lim_{ \| u_2 (\cdot) \|_{{\cal A}_2^S} {\rightarrow +\infty} } J ( u_1 (\cdot), u_2(\cdot) ) =- \infty .
\end{eqnarray*}

In summary, by Lemma \ref{lem:5.2}, Problem \ref{pro:5.1} has  at least
  an saddle point $(\bar u_(
  \cdot),\bar  u_2(\cdot))\in {\cal A}_1^S\times
  {\cal A}_2^S.$ The proof is complete.
\end{proof}

In the following,  applying the maximum
principle to our LQ differential game problem, we give the dual presentation of the optimal control in terms
of the corresponding adjoint process.

\begin{thm}\label{thm:5.3}
Let  Assumptions \ref{ass:5.1}and \ref{ass:5.2} be satisfied.
 Then, a necessary and
sufficient condition for an admissible pair $(u_1(\cdot),u_2(\cdot); x(\cdot))$ to be an optimal pair of  Problem (LQ) is the
control $(u_1(\cdot), u_2(\cdot))$ satisfies
\begin{eqnarray} \label{eq:5.13}
  \begin{split}
 &2N_{11}(t)u(t)+2N_{12}(t)\mathbb E[u_1(t)]+
 (B^\top_1(t)-h(t)G_{11}^\top(t))
 \mathbb E[p(t)|\mathscr F^{Y}_{t}]+(B^\top_{12}(t)
 -h(t)G^\top_{12}(t))\mathbb E [p(t)]
 \\&~~~~~~+ D^{\top}_{12}(t)\mathbb E[q(t)|\mathscr F^{Y}_{t}]
 +D^{\top}_{12}(t) \mathbb E [q(t)]=0, \quad a.e. a.s.,
     \end{split}
\end{eqnarray}
and
\begin{eqnarray} \label{eq:5.14}
  \begin{split}
 &2N_{21}(t)u(t)+2N_{22}(t)\mathbb E[u(t)]+
 (B^\top_1(t)-h(t)G_{21}^\top(t))
 \mathbb E[p(t)|\mathscr F^{Y}_{t}]+(B^\top_{22}(t)
 -h(t)G^\top_{22}(t))\mathbb E [p(t)]
 \\&~~~~~~+ D^{\top}_{21}(t)\mathbb E[q(t)|\mathscr F^{Y}_{t}]
 +D^{\top}_{22}(t) \mathbb E [q(t)]=0, \quad a.e. a.s.,
     \end{split}
\end{eqnarray}

where $(p(\cdot),q(\cdot), \tilde q(\cdot)) $ is the
solution to the adjoint equation \eqref{eq:5.7}
corresponding to $(u(\cdot), X(\cdot))$.
\end{thm}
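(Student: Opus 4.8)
The plan is to treat Problem \ref{pro:5.1} as the special case of Problem \ref{pro:4.1} obtained from the data \eqref{eq:5.5}, and then to read off \eqref{eq:5.13}--\eqref{eq:5.14} from the general maximum principles already established, namely the necessary condition (Theorem \ref{thm:4.5}) and the sufficient condition (Theorem \ref{thm:4.3}). Since Assumptions \ref{ass:1.1} and \ref{ass:1.2} hold for the coefficients \eqref{eq:5.5}, both theorems apply, and the general adjoint equation \eqref{eq:4.4} specializes to \eqref{eq:5.7}, so that the relevant adjoint triple is $(p(\cdot),q(\cdot),\tilde q(\cdot))$.

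For the necessity direction I would start from the variational inequalities \eqref{eq:4.10} and \eqref{eq:4.11}. The decisive simplification here is that the control sets are the full Euclidean spaces $U_1=\mathbb R^{k_1}$, $U_2=\mathbb R^{k_2}$, so there is no boundary: for each fixed direction $u_i-\bar u_i(t)$ one may replace it by $\pm\lambda(u_i-\bar u_i(t))$ with $\lambda>0$ arbitrary, which forces the inner products in \eqref{eq:4.10}--\eqref{eq:4.11} to vanish. Hence the variational inequalities collapse to the stationarity identities
\begin{equation*}
\mathbb E[\bar H_{u_1}(t)\mid\mathscr F_t^Y]+\mathbb E[\bar H_{v_1}(t)]=0,\qquad
\mathbb E[\bar H_{u_2}(t)\mid\mathscr F_t^Y]+\mathbb E[\bar H_{v_2}(t)]=0.
\end{equation*}
It then remains to differentiate the explicit Hamiltonian \eqref{eq:5.6} in $u_1,v_1,u_2,v_2$. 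Because all coefficient matrices are deterministic and $\bar u_i(\cdot)$ is $\mathscr F_t^Y$-adapted, the conditional expectation commutes with the coefficients and acts only on $(\bar p,\bar q,\bar{\tilde q})$; collecting the ``pointwise'' terms (carrying $\mathbb E[\,\cdot\mid\mathscr F_t^Y]$) and the ``mean'' terms (carrying $\mathbb E[\,\cdot\,]$) reproduces \eqref{eq:5.13} and \eqref{eq:5.14}.

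For the sufficiency direction I would invoke Theorem \ref{thm:4.3}. First I check its structural hypotheses for the data \eqref{eq:5.5}: the terminal cost $m(x,y)=\langle M,x\rangle$ is affine, hence simultaneously convex and concave, so part (iii) is in force; by Assumption \ref{ass:5.2} the quadratic control penalties render $(x,y,u_1,v_1)\mapsto H$ convex and $(x,y,u_2,v_2)\mapsto H$ concave, the state dependence being merely affine and so contributing nothing to the Hessian. Granting these, Theorem \ref{thm:4.3}(i)--(ii) turns the stationarity identities above --- which are exactly \eqref{eq:5.13}--\eqref{eq:5.14} read from $\bar u_i$ back to $u_i$ --- into the saddle inequalities $J(\bar u_1,u_2)\le J(\bar u_1,\bar u_2)\le J(u_1,\bar u_2)$, whence $(u_1(\cdot),u_2(\cdot);x(\cdot))$ is an optimal pair.

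The main obstacle I anticipate is bookkeeping in the gradient computation rather than any conceptual difficulty: one must carefully separate the pointwise partials $H_{u_i}$ from the mean partials $H_{v_i}$, keep the deterministic matrices outside $\mathbb E[\,\cdot\mid\mathscr F_t^Y]$, and verify that the $\tilde g$-contribution enters correctly through $\bar{\tilde q}$. A secondary point needing care is the convexity claim: Assumption \ref{ass:5.2} controls $N_{11}$ and $N_{11}+N_{12}$ (and their $N_{2\cdot}$ analogues) rather than $N_{12}$ or $N_{22}$ alone, so the convexity of $H$ demanded by Theorem \ref{thm:4.3} is most safely read as the functional convexity of $J$ in $u_1(\cdot)$ and concavity in $u_2(\cdot)$ --- that is, through the fluctuation-plus-mean decomposition already exhibited in the preceding existence proof, which is precisely what makes the sufficient maximum principle applicable in this mean-field setting.
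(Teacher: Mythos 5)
Your proposal is correct and follows essentially the same route as the paper: necessity via Theorem \ref{thm:4.5} with the unconstrained control sets collapsing the variational inequalities \eqref{eq:4.10}--\eqref{eq:4.11} to the stationarity identity \eqref{eq:5.10}, and sufficiency by verifying the convexity/concavity hypotheses of Theorem \ref{thm:4.3} and recovering \eqref{eq:4.8} and \eqref{eq:4.80} from \eqref{eq:5.13}--\eqref{eq:5.14} using the $\mathscr F_t^Y$-adaptedness of admissible controls. Your closing remark about the convexity of $H$ in $(u_1,v_1)$ not following from Assumption \ref{ass:5.2} alone (which bounds $N_{11}$ and $N_{11}+N_{12}$ but not $N_{12}$) is a genuine subtlety that the paper's own proof passes over with ``it is easy to check,'' so flagging the functional-convexity reading of $J$ is a worthwhile addition rather than a deviation.
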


\begin{proof}
 For the necessary part, let $(u_1(\cdot), u_2(\cdot), x(\cdot))$
 be an optimal pair associated with
 the adjoint process $(p(\cdot),q(\cdot), \tilde q(\cdot)).$   Since
 there is no constraints on the control processes, then  from the
 necessary optimality conditions \eqref{eq:4.10} and \eqref{eq:4.11}
 (see Theorem \ref{thm:4.5}), we get  that for
 $i=1,2,$
 \begin{eqnarray} \label{eq:5.10}
 \begin{split}
 &\mathbb E\bigg[H_{u_i}(t,x(t),\mathbb E[x(t)], u_1(t), \mathbb E[u_1(t)],u_2(t), \mathbb E[u_2(t)],, p(t), q(t), \tilde q(t))|\mathscr
F_t^{Y}\bigg]
\\&+\mathbb E\bigg[H_{v_i}(t,x(t),\mathbb E[x(t)], u_1(t), \mathbb E[u_1(t)],u_2(t), \mathbb E[u_2(t)], p(t), q(t), \tilde q(t))\bigg]=0,
\end{split}
 \end{eqnarray}
 which leads to
 \eqref{eq:5.14} and \eqref{eq:5.13} ( recalling  the definition  \eqref{eq:5.6}
  of Hamiltonian  $H$).

  For the sufficient part,  let $(u(\cdot), X(\cdot))$
 be an admissible pair associated with
 the adjoint process $(p(\cdot),q(\cdot), \tilde q(\cdot)) $ and assume  the conditions
 \eqref{eq:5.1} and \eqref{eq:5.14} hold.   From the
 definition of $H$ (see \eqref{eq:5.6}),
   the conditions
 \eqref{eq:5.13} and \eqref{eq:5.14} implies \eqref{eq:5.10} holds.
  Thus,  since   any
  admissible control is
  $ \mathscr F_{t}^Y$-adapted process.
   by\eqref{eq:5.10}
  , for any other
  admissible control $v_i(\cdot)\in {\cal A}_i^S,i=1,2,$
  from the property of conditional
  expectation, we have
  \begin{eqnarray} \label{eq:4.13}
  \begin{split}
  &\mathbb E\bigg[\bigg\langle  v_i (t) - u_i (t), H_{u_i}(t,x(t),\mathbb E[x(t)], u_1(t), \mathbb E[u_1(t)],u_2(t), \mathbb E[u_2(t)], p(t), q(t), \tilde q(t))
\\&~~~~~~+\mathbb E\bigg[H_{v_i}(t,x(t),\mathbb E[x(t)], u_1(t), \mathbb E[u_1(t)], u_2(t), \mathbb E[u_2(t)],p(t), q(t), \tilde q(t))\bigg]\bigg\rangle\bigg]
\\=&\mathbb E\bigg[\bigg\langle  v_i (t) - \bar u_i (t), \mathbb E\bigg[H_u(t,x(t),\mathbb E[x(t)], u_1(t), \mathbb E[u_1(t)],u_2(t), \mathbb E[u_2(t)], p(t), q(t), \tilde q(t))|\mathscr
F_t^{Y}\bigg]
\\&~~~~~~+\mathbb E\bigg[H_v(t,x(t),\mathbb E[x(t)], u_1(t), \mathbb E[u_1(t)],u_2(t), \mathbb E[u_2(t)], p(t), q(t), \tilde q(t))\bigg]\bigg\rangle\bigg]
\\&=0,
\end{split}
\end{eqnarray}
which implies that
the conditions \eqref{eq:4.8} and \eqref{eq:4.80} in Theorem \ref{thm:4.3}
holds. Moreover, under Assumptions \ref{ass:5.1}
and \ref{ass:5.2}, it is easy to check that all other conditions of (i) and (ii) in  Theorem \ref{thm:4.3} are satisfied.
Therefore, by (iii) in Theorem \ref{thm:4.3}, we conclude that $(u_1(\cdot), u_2(\cdot); x(\cdot))$ is an optimal loop-open control pair. The proof is complete.
\end{proof}
From the above result, we see that if Problem
\ref{pro:4.1}
admits an open-loop saddle point,  then the following FBSDE admits an adapted solution $(
u(\cdot), x(\cdot), p(\cdot),q(\cdot), \tilde q(\cdot)) $
\begin{equation}\label{eq:5.12}
\left\{\begin {array}{ll}
  dX(t)=&(A_1(t)X(t)+A_2(t)\mathbb E [X(t)]
  +B_{11}(t)u_1(t)+B_{12}(t)\mathbb E [u_1(t)]
  +B_{21}(t)u_2(t)+B_{22}(t)\mathbb E [u_2(t)])dt
  \\&+(C_1(t)X(t)
  + C_2(t)\mathbb E [X(t)]
  +D_{11}(t)u_1(t)+D_{12}(t)\mathbb E [u_1(t)]
  +D_{21}(t)u_2(t)+D_{22}(t)\mathbb E [u_2(t)])dW(t)
  \\&+(F_1(t)X(t)
  +F_2(t)\mathbb E [X(t)]
  +G_{11}(t)u_1(t)+G_{12}(t)\mathbb E [u_1(t)]
  +G_{21}(t)u_2(t)+G_{22}(t)\mathbb E [u_2(t)])dW^{(u_1,u_2)}(t),\\
  dY(t)  =& h(t)dt+dW^{(u_1, u_2)}(t), \\
  dp(t)=&-\bigg[(A^\top_1(t)-h(t)F_1^\top(t))p(t)+ (A_2^\top(t)-h(t)F_2^\top(t))\mathbb E [p(t)]+C^{\top}_1(t)q(t)
\\&+ C^{\top}_2(t)\mathbb E[q(t)]
+ F^{\top}_1(t)\tilde q(t)+F^{\top}_2(t)\mathbb E[\tilde q(t)]+Q(t)\bigg]dt+q(t)dW(t)+\tilde q(t)dY(t),
 \\x(0)=&x ,p(T)=M_, Y(0)=0\\
 2N_{i1}(t)u(t)&+2N_{i2}(t)\mathbb E[u_i(t)]+
 (B^\top_{i1}(t)-h(t)G_{i1}^\top(t))
 \mathbb E[p(t)|\mathscr F^{Y}_{t}]+(B^\top_{i2}(t)
 -h(t)G^\top_{i2}(t))\mathbb E [p(t)]
 \\&~~~~~~+ D^{\top}_{i2}(t)\mathbb E[q(t)|\mathscr F^{Y}_{t}]
 +D^{\top}_{i2}(t) \mathbb E [q(t)]=0, i=1,2,\quad a.e. a.s.
\end {array}
\right.
\end{equation}
Then by Theorem \ref{thm:5.3},
we can  directly  obtain the following   equivalence
 between the solvability of optimality system \eqref{eq:5.12} and the existence  of
 the optimal  open-loop
 control of Problem \ref{pro:5.1}.
\begin{cor}
  Let Assumptions \ref{ass:5.1} and
  \ref{ass:5.2} be satisfied.
 Then, a necessary and
sufficient condition for  that
the optimality system \eqref{eq:5.12}
has a    solution  $(u_1(\cdot), u_2(\cdot), x(\cdot),p(\cdot),
 q(\cdot),\tilde q(\cdot))\in
 M_{\mathscr{F}^Y}^2(0,
T;\mathbb R^n)\times M_{\mathscr{F}^Y}^2(0, T;\mathbb R^n)\times S_{\mathscr{F}}^2(0,
T;\mathbb R^n)\times S_{\mathscr{F}}^2(0, T;\mathbb R^n)\times M_{\mathscr{F}}^2(0,T;\mathbb R^{n})\times M_{\mathscr{F}}^2(0,T;\mathbb R^{n})$  is that
 $(u_1(\cdot), u_2(\cdot); x(\cdot))$ is  an
 optimal open-loop pair of  Problem \ref{pro:5.1}.
\end{cor}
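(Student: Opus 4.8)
The plan is to recognize the optimality system \eqref{eq:5.12} as nothing more than the bundling together of three objects that already appear individually in the theory: the forward state dynamics \eqref{eq:5.1}--\eqref{eq:5.2}, the backward adjoint equation \eqref{eq:5.7}, and the pointwise stationarity relations \eqref{eq:5.13}--\eqref{eq:5.14} (written jointly for $i=1,2$ in the last line of \eqref{eq:5.12}). Thus the Corollary is an essentially book-keeping reformulation of Theorem \ref{thm:5.3}, and I would prove the two implications separately by reading off Theorem \ref{thm:5.3} in each direction.

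First I would show that solvability of \eqref{eq:5.12} implies optimality. Suppose $(u_1(\cdot),u_2(\cdot),x(\cdot),p(\cdot),q(\cdot),\tilde q(\cdot))$ in the indicated product space solves \eqref{eq:5.12}. Then $x(\cdot)$ together with the $\mathscr F^Y_t$-adapted controls $(u_1,u_2)$ satisfies the forward dynamics, so $(u_1,u_2;x)$ is an admissible pair; moreover $(p,q,\tilde q)$ solves the linear mean-field adjoint BSDE \eqref{eq:5.7}, and the last line of \eqref{eq:5.12} is exactly \eqref{eq:5.13}--\eqref{eq:5.14}. The sufficiency direction of Theorem \ref{thm:5.3} then yields that $(u_1,u_2;x)$ is an optimal open-loop pair of Problem \ref{pro:5.1}.

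For the reverse implication I would start from an optimal open-loop pair $(u_1,u_2;x)$. The forward equations \eqref{eq:5.1}--\eqref{eq:5.2} hold by admissibility, and under Assumption \ref{ass:5.1} the adjoint equation \eqref{eq:5.7} is a linear mean-field BSDE with bounded coefficients and square-integrable data, so by the well-posedness result cited after \eqref{eq:4.4} it admits a unique solution $(p,q,\tilde q)\in S_{\mathscr F}^2\times M_{\mathscr F}^2\times M_{\mathscr F}^2$ corresponding to $(u_1,u_2;x)$. Invoking the necessity direction of Theorem \ref{thm:5.3} shows that this triple together with the optimal control satisfies \eqref{eq:5.13}--\eqref{eq:5.14}. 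Collecting the forward equations, the adjoint equation, and these stationarity relations reconstitutes precisely system \eqref{eq:5.12}, so the full tuple is a solution.

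The only point requiring a little care --- and the step I would flag as the main obstacle --- is matching the regularity classes: one must verify that the adjoint processes produced for an optimal pair actually lie in the product space declared in the Corollary, and conversely that a solution of \eqref{eq:5.12} automatically has its forward component $x(\cdot)$ in $S_{\mathscr F}^2(0,T;\mathbb R^n)$. Both follow from the a priori estimates of Lemma 1.4 in Tang and Meng (2016) for the linear forward SDE and from the standard $L^2$ theory for linear mean-field BSDEs, so no new analysis beyond Theorem \ref{thm:5.3} is needed.
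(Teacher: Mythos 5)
Your proposal is correct and follows essentially the same route as the paper, which states that the corollary is obtained directly from Theorem \ref{thm:5.3} by recognizing the optimality system \eqref{eq:5.12} as the conjunction of the forward dynamics, the adjoint equation \eqref{eq:5.7}, and the stationarity conditions \eqref{eq:5.13}--\eqref{eq:5.14}. Your additional remark on matching the regularity classes is a reasonable point of care but does not change the argument.
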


\begin{rmk}
In summary, the optimality system \eqref{eq:5.12}
completely characterizes the optimal open-loop control of Problem \ref{pro:5.1}. Therefore, solving Problem \ref{pro:5.1} is equivalent to solving
the optimality system, moreover, the  optimal
open-loop control
can be given by \eqref{eq:5.13} and
\eqref{eq:5.14} which implies  that
the optimal loop-open control $(u_1(\cdot),u_2(\cdot))$
has the following explicit
dual presentation

\begin{eqnarray} \label{eq:4.17}
  \begin{split}
 \bar u_{1}(t)=&-\frac{1}{2}N^{-1}_{11}(t)\bigg\{
 (B^\top_{11}(t)-h(t)G_{11}^\top(t))
 \mathbb E[p(t)|\mathscr F^{Y}_{t}]+(B^\top_{12}(t)
 -h(t)G^\top_{12}(t))\mathbb E [p(t)]
 \\&~~~~~~+D^{\top}_{11}(t)\mathbb E[q(t)|\mathscr F^{Y}_{t}]
 +D^{\top}_{12}(t) \mathbb E [q(t)]
\\&+ N_{12}(t)(N_{11}(t)+ N_{12}(t))^{-1}\bigg[(B_{11} (t)+ B_{12} (t)-h(t)G_{11} (t)
 -h(t)G_{12} (t))^\top \mathbb E [p(t)]
 \\&\quad \quad\quad\quad\quad\quad\quad
 \quad+ (D_{11}(t)
 +D_{12}(t))^\top \mathbb E [q(t)]\bigg]\bigg\}
, \quad a.e. a.s.
     \end{split}
\end{eqnarray}

and

\begin{eqnarray} \label{eq:4.17}
  \begin{split}
 \bar u_{2}(t)=&-\frac{1}{2}N^{-1}_{21}(t)\bigg\{
 (B^\top_{21}(t)-h(t)G_{21}^\top(t))
 \mathbb E[p(t)|\mathscr F^{Y}_{t}]+(B^\top_{22}(t)
 -h(t)G^\top_{22}(t))\mathbb E [p(t)]
 \\&~~~~~~+D^{\top}_{21}(t)\mathbb E[q(t)|\mathscr F^{Y}_{t}]
 +D^{\top}_{22}(t) \mathbb E [q(t)]
\\&+ N_{22}(t)(N_{21}(t)+ N_{22}(t))^{-1}\bigg[(B_{21} (t)+ B_{22} (t)-h(t)G_{21} (t)
 -h(t)G_{22} (t))^\top \mathbb E [p(t)]
 \\&\quad \quad\quad\quad\quad\quad\quad
 \quad+ (D_{21}(t)
 +D_{22}(t))^\top \mathbb E [q(t)]\bigg]\bigg\}
, \quad a.e. a.s.
     \end{split}
\end{eqnarray}

\end{rmk}

\section{Conclusion}
In this paper, we have proved partial observed
stochastic maximum principle  for  the stochastic   differential
games  driven by  mean-field stochastic
differential equations.
     As an application, some  partial observed  linear quadratic stochastic differential game problem
      of mean-field type  is discussed and the
       the existence of  the open-loop saddle is obtained and the
       optimal control process is characterized
       explicitly by the adjoint process.
       Our main results could be seen as an extension of the stochastic optimal control problem
     studied   in Tang and Meng (2016), to the two-person zero-sum stochastic differential game problem.

\end{document}